\documentclass[a4paper,oneside,english]{amsart}
\usepackage[T1]{fontenc}
\usepackage[latin9]{inputenc}
\usepackage{mathrsfs}
\usepackage{amstext}
\usepackage{amsthm}
\usepackage{amssymb}
\usepackage{stmaryrd}
\usepackage[all]{xy}

\makeatletter

\pdfpageheight\paperheight
\pdfpagewidth\paperwidth

\newcommand{\noun}[1]{\textsc{#1}}

\numberwithin{equation}{section}
\numberwithin{figure}{section}
\theoremstyle{plain}
\newtheorem{thm}{\protect\theoremname}
  \theoremstyle{definition}
  \newtheorem{defn}[thm]{\protect\definitionname}
  \theoremstyle{remark}
  \newtheorem{rem}[thm]{\protect\remarkname}
  \theoremstyle{plain}
  \newtheorem{prop}[thm]{\protect\propositionname}
  \theoremstyle{plain}
  \newtheorem{cor}[thm]{\protect\corollaryname}
  \theoremstyle{plain}
  \newtheorem{lem}[thm]{\protect\lemmaname}

\usepackage[dvipsnames,svgnames,x11names,hyperref]{xcolor}
\usepackage[colorlinks=true,linkcolor=Dark Red, citecolor=Dark Green,linktoc=all]{hyperref}
\usepackage{lmodern}
\renewcommand*{\epsilon}{\varepsilon}
\usepackage{amssymb}
\usepackage{amsfonts}
\usepackage{egothic}
\usepackage[T1]{fontenc}

\usepackage{url}
\usepackage{amsthm}
\usepackage{aliascnt}
\usepackage{lipsum}
\usepackage{mathrsfs}
\usepackage{esint}
\usepackage{url}
\usepackage{amsmath}
\usepackage{dsfont}
\usepackage{bbm}
\usepackage{pdflscape}
\usepackage{bbm}
\usepackage{bussproofs}

\usepackage{tikz}
\tikzset{node distance=2.5cm, auto}
\usetikzlibrary{positioning}

\newcommand{\myar}[2]{\ar^-{#1}[#2]}
\newcommand{\myard}[2]{\ar_-{#1}[#2]}

\SelectTips{cm}{10}


\makeatletter
\def\matrixobject@{%
  \edef \next@{={\DirectionfromtheDirection@ }}%
  \expandafter \toks@ \next@ \plainxy@
  \let\xy@@ix@=\xyq@@toksix@
  \xyFN@ \OBJECT@}
\let\xy@entry@@norm=\entry@@norm
\def\entry@@norm@patched{%
  \let\object@=\matrixobject@
  \xy@entry@@norm }
\AtBeginDocument{\let\entry@@norm\entry@@norm@patched}
\makeatother

\newcommand{\twocong}[2][0.5]{\ar@{}[#2] \save ?(#1)*{\cong}\restore}
\newcommand{\twoeq}[2][0.5]{\ar@{}[#2] \save ?(#1)*{=}\restore}
\newcommand{\ltwocell}[3][0.5]{\ar@{}[#2] \ar@{=>}?(#1)+/r 0.2cm/;?(#1)+/l 0.2cm/^{#3}}
\newcommand{\rtwocell}[3][0.5]{\ar@{}[#2] \ar@{=>}?(#1)+/l 0.2cm/;?(#1)+/r 0.2cm/^{#3}}
\newcommand{\utwocell}[3][0.5]{\ar@{}[#2] \ar@{=>}?(#1)+/d  0.2cm/;?(#1)+/u 0.2cm/_{#3}}
\newcommand{\dtwocell}[3][0.5]{\ar@{}[#2] \ar@{=>}?(#1)+/u  0.2cm/;?(#1)+/d 0.2cm/^{#3}}
\newcommand{\ultwocell}[3][0.5]{\ar@{}[#2] \ar@{=>}?(#1)+/dr  0.2cm/;?(#1)+/ul 0.2cm/^{#3}}
\newcommand{\urtwocell}[3][0.5]{\ar@{}[#2] \ar@{=>}?(#1)+/dl  0.2cm/;?(#1)+/ur 0.2cm/^{#3}}
\newcommand{\dltwocell}[3][0.5]{\ar@{}[#2] \ar@{=>}?(#1)+/ur  0.2cm/;?(#1)+/dl 0.2cm/^{#3}}
\newcommand{\drtwocell}[3][0.5]{\ar@{}[#2] \ar@{=>}?(#1)+/ul  0.2cm/;?(#1)+/dr 0.2cm/^{#3}}



\makeatother

\usepackage{babel}
  \providecommand{\corollaryname}{Corollary}
  \providecommand{\definitionname}{Definition}
  \providecommand{\lemmaname}{Lemma}
  \providecommand{\propositionname}{Proposition}
  \providecommand{\remarkname}{Remark}
\providecommand{\theoremname}{Theorem}

\begin{document}

\title{Generic Bicategories }

\author{Charles Walker}

\keywords{monad, bicategory, span, polynomial functor, generic morphism}

\subjclass[2000]{18C15, 18D05}

\address{Department of Mathematics, Macquarie University, NSW 2109, Australia}

\email{charles.walker1@mq.edu.au}

\thanks{The author acknowledges the support of an Australian Government Research
Training Program Scholarship.}

\date{\today}
\begin{abstract}
It is well known that to give an oplax functor of bicategories $\mathbf{1}\to\mathscr{C}$
is to give a comonad in $\mathscr{C}$. Here we generalize this fact,
replacing the terminal bicategory by any bicategory $\mathscr{A}$
for which the composition functor admits generic factorisations. We
call bicategories with this property\emph{ generic}, and show that
for generic bicategories $\mathscr{A}$ one may express the data of
an oplax functor $\mathscr{A}\to\mathscr{C}$ much like the data of
a comonad; the main advantage of this description being that it does
not directly involve composition in $\mathscr{A}$.

We then go on to apply this result to some well known bicategories,
such as cartesian monoidal categories (seen as one object bicategories),
bicategories of spans, and bicategories of polynomials with cartesian
2-cells.
\end{abstract}

\maketitle
\tableofcontents{}

\section{Introduction}

A classical and simple fact about monads in a bicategory $\mathscr{C}$
is that they are in bijection with lax functors $L\colon\mathbf{1}\to\mathscr{C}$
where $\mathbf{1}$ is the terminal bicategory \cite{StreetFTM}.
Dually, comonads in $\mathscr{C}$ correspond to oplax functors $L\colon\mathbf{1}\to\mathscr{C}$.
The purpose of this paper is to provide a generalization of this dual,
showing that this correspondence may be realized as a special case
of a more general result. 

This is done by replacing the terminal bicategory with bicategories
$\mathscr{A}$ satisfying the following special property: every functor
\[
\mathscr{A}_{X,Z}\left(c,-\circ-\right)\colon\mathscr{A}_{Y,Z}\times\mathscr{A}_{X,Y}\to\mathbf{Set},\qquad X,Y,Z,c\in\mathscr{A}
\]
is a coproduct of representables. A more informative and equivalent
characterization is as follows: every composition functor
\[
\circ\colon\mathscr{A}_{Y,Z}\times\mathscr{A}_{X,Y}\to\mathscr{A}_{X,Z},\qquad X,Y,Z\in\mathscr{A}
\]
admits generic factorisations. We will call bicategories \emph{$\mathscr{A}$
}satisfying this property \emph{generic}.

Informally, this property means that the bicategory $\mathscr{A}$
contains ``diagonal'' 2-cells. A simple example of this is given
by taking $\mathscr{A}$ to be a cartesian monoidal category $\left(\mathcal{E},\times,\mathbf{1}\right)$
seen as a one-object bicategory, where we have diagonal maps $\delta\colon T\to T\times T$
for each $T\in\mathcal{E}$. Another example is given by taking $\mathscr{A}$
to be the bicategory of spans $\mathbf{Span}\left(\mathcal{E}\right)$
in a category $\mathcal{E}$ with pullbacks; here our diagonal maps
are morphisms $\delta$ induced into pullbacks as in
\[
\xymatrix@=1em{ &  & T\ar@{..>}[d]^{\delta}\ar@/^{1pc}/[rrddd]^{t}\ar@/_{1pc}/[llddd]_{s}\\
 &  & M\ar[rd]^{\pi_{2}}\ar[ld]_{\pi_{1}}\ar@{}[dd]|-{\underset{\;}{\textnormal{pb}}}\\
 & S\ar[rd]^{h}\ar[ld]_{s} &  & S\ar[rd]^{t}\ar[ld]_{h}\\
X &  & Y &  & Z
}
\]
such that $\pi_{1}\delta$ and $\pi_{2}\delta$ are identities. This
can also be done for the bicategory of polynomials $\mathbf{Poly}_{c}\left(\mathcal{E}\right)$
with cartesian 2-cells, but becomes more complicated.

Such bicategories also contain ``nullary diagonals'' or augmentations;
these are the 2-cells into identity 1-cells, and turn out to be unique
in such bicategories.

The main result of this paper is that for generic bicategories $\mathscr{A}$,
the functors $\mathscr{A}\to\mathscr{C}$ which respect these diagonals
are precisely the oplax functors. Here ``respecting diagonals''
means that each diagonal $\delta$ and augmentation $\epsilon$ in
$\mathscr{A}$ has a corresponding comultiplication map $\Phi_{\delta}$
and counit map $\Lambda_{\epsilon}$ in $\mathscr{C}$ satisfying
coherence conditions much like those for a comonad.

When the domain bicategory $\mathscr{A}$ is generic, this description
has an important advantage over the usual definition of an oplax functor:
it does not involve composition in the domain bicategory. This reduction
being possible since the information concerning composition in $\mathscr{A}$
is encoded into these diagonal maps. Of course, this property is particularly
useful if composition in $\mathscr{A}$ is complicated; the bicategory
of polynomials being an archetypal example. 

In Section \ref{main} we develop the theory of such bicategories
$\mathscr{A}$ and their diagonal maps, and prove the main result
of this paper, Theorem \ref{generalcoherence}, in which we prove
the equivalence of oplax functors and functors which respect these
diagonals.

In Section \ref{consequences}, we use this result to give a description
of oplax functors out of the bicategory of spans which does not involve
composition of spans (pullbacks), and then give a description of oplax
functors out of the bicategory of polynomials which does not involve
composition of polynomials. 

These descriptions allow for a simpler proof of the universal properties
of spans \cite{unispans}, and a much simpler proof of the universal
properties of polynomials. In our next paper we will use these descriptions
to give an efficient proof of these universal properties.

In Section \ref{GenericDocYoneda} we discuss how this description
of oplax functors can be seen as an instance of doctrinal Yoneda structures,
seen as a consequence of the simpler Day convolution structure on
generic bicategories.

\section{Properties of generic bicategories\label{main}}

In this section we start off by recalling the basic theory of generic
morphisms and functors which admit them. We then define generic bicategories
and consider the properties of generic morphisms in these generic
bicategories. After discussing the coherence properties of these generic
morphisms, we go on to give the main result of this paper; showing
that the functors which respect these generic morphisms are precisely
the oplax functors. 

\subsection{Generic morphisms and factorisations}

Generic morphisms (and weaker analogues of them) have historically
arisen in the characterization the analytic endofunctors of $\mathbf{Set}$
\cite{Joyal86}, as well as the study of qualitative domains \cite{Gir86,Lam88}.
Characterizations of endofunctors which admit them have been studied
by Weber \cite{WeberGeneric}, and this is known to be related to
familial representability as studied by Diers \cite{Diers}.

In this paper we do not consider arbitrary endofunctors which admit
generics, but instead composition functors which admit generics, giving
us a richer structure to consider.
\begin{defn}
Given a functor $T\colon\mathcal{A}\to\mathcal{B}$ between categories
$\mathcal{A}$ and $\mathcal{B}$, we say a morphism $\delta\colon B\to TA$
in $\mathcal{B}$ (where $A\in\mathcal{A}$ and $B\in\mathcal{B}$)
is\emph{ $T$-generic} if for any commutative square of the form below
\[
\xymatrix{B\myard{\delta}{d}\myar{f}{r} & TC\ar[d]^{Tg}\\
TA\ar[r]_{Th}\ar@{..>}[ur]|-{T\overline{f}} & TD
}
\]
there exists a unique morphism $\overline{f}$ in $\mathcal{A}$ such
that $T\overline{f}\cdot\delta=f$.
\end{defn}

\begin{rem}
These are precisely the diagonally universal morphisms of Diers \cite{DiersDiag},
who noted that it must follow $g\cdot\overline{f}=h$ since both fillers
below
\[
\xymatrix{B\myard{\delta}{d}\myar{Tg\cdot f}{r} & TD\ar[d]^{T1_{D}} &  & B\myard{\delta}{d}\myar{Tg\cdot f}{r} & TD\ar[d]^{T1_{D}}\\
TA\ar[r]_{Th}\ar@{..>}[ur]|-{T\left(g\cdot\overline{f}\right)} & TD &  & TA\ar[r]_{Th}\ar@{..>}[ur]|-{Th} & TD
}
\]
render commutative the top triangles.
\end{rem}

\begin{defn}
We say a functor $T\colon\mathcal{A}\to\mathcal{B}$ between categories
$\mathcal{A}$ and $\mathcal{B}$ \emph{admits generic factorisations}
if for any morphism $f\colon B\to TC$ in $\mathcal{B}$ there exists
a $T$-generic morphism $\delta\colon B\to TA$ in $\mathcal{B}$
and morphism $\overline{f}\colon A\to C$ in $\mathcal{A}$ rendering
commutative
\[
\xymatrix@=1em{ & TA\ar[rd]^{T\overline{f}}\\
B\ar[ur]^{\delta}\ar[rr]_{f} &  & TC
}
\]
\end{defn}

We are now ready to define generic bicategories, the structures to
be considered in this paper. It will be helpful to write composition
in diagrammatic order, denoted by the symbol ``;''.
\begin{defn}
We say a bicategory $\mathscr{A}$ is \emph{generic }if for every
triple of objects $X,Y,Z\in\mathscr{A}$ the composition functor
\[
\xymatrix{\mathscr{A}_{X,Y}\times\mathscr{A}_{Y,Z}\myar{;}{r} & \mathscr{A}_{X,Z}}
\]
admits generic factorisations. Moreover, we simply call \emph{generic}
those 2-cells $\delta\colon c\to l;r$ which are ;-generic. 
\end{defn}

\begin{rem}
Unpacking the above definition into a more useful form, we see that
a 2-cell $\delta\colon c\to l;r$ is generic if and only if every
commuting diagram of the form
\[
\xymatrix{c\ar[d]_{\delta}\myar{\gamma}{r} & f;g\ar[d]^{\phi_{1};\phi_{2}}\\
l;r\myard{\theta_{1};\theta_{2}}{r}\ar@{..>}[ur]|-{\gamma_{1};\gamma_{2}} & m;n
}
\]
(where $\theta_{1},\theta_{2},\phi_{1},\phi_{2}$ and $\gamma$ are
arbitrary 2-cells) admits a filler $\gamma_{1};\gamma_{2}$ as displayed,
such that the top triangle commutes and the bottom triangle commutes
component-wise. Moreover, the pair $\left(\gamma_{1},\gamma_{2}\right)$
must be unique such that the top triangle commutes, justifying the
notation.
\end{rem}

\begin{rem}
As we will see in Section \ref{consequences}, there are a number
of well known bicategories and monoidal categories which are generic,
such as:
\begin{itemize}
\item any cartesian monoidal category;
\item finite sets and bijections with the disjoint union monoidal structure;
\item the bicategory of spans;
\item the bicategory of polynomials with cartesian 2-cells.
\end{itemize}
\end{rem}

Generic bicategories may be alternatively defined in terms of familial
representability, a property which is often easier to verify. This
is a consequence of the following known relationship\footnote{We include the proof of this relationship due to the difficulty of finding a reference.}
between functors which admit generics and the familial representability
conditions of Diers \cite{Diers}. 
\begin{prop}
[Diers] Given a functor $T\colon\mathcal{A}\to\mathcal{B}$ between
categories $\mathcal{A}$ and $\mathcal{B}$ the following are equivalent:
\begin{enumerate}
\item the functor $T$ admits generic factorisations;
\item for every $B\in\mathcal{B}$ there exists a set $\mathfrak{M}_{B}$
and function $P_{\left(-\right)}\colon\mathfrak{M}_{B}\to\mathcal{A}_{\textnormal{ob}}$
yielding isomorphisms 
\[
\mathcal{B}\left(B,TA\right)\cong\sum_{\delta\in\mathfrak{M}_{B}}\mathcal{A}\left(P_{\delta},A\right)
\]
natural in $A\in\mathcal{A}$.
\end{enumerate}
\end{prop}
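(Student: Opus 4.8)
The plan is to prove the two directions separately, building the natural family of isomorphisms out of generic factorisations in one direction and recovering generic factorisations from the family in the other.

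\medskip

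\noindent\textbf{(1) $\Rightarrow$ (2).} Fix $B\in\mathcal{B}$. Consider the comma category $(B\downarrow T)$ whose objects are pairs $(A,f)$ with $f\colon B\to TA$; a $T$-generic morphism $\delta\colon B\to TP$ is, by the diagonal universal property, precisely an object $(P,\delta)$ of $(B\downarrow T)$ that is \emph{initial in its connected component}. I would let $\mathfrak{M}_B$ be a set of representatives for those connected components of $(B\downarrow T)$ that contain a generic, with $P_{(-)}\colon\mathfrak M_B\to\mathcal A_{\mathrm{ob}}$ sending a component to the domain object $P_\delta$ of its chosen generic. The hypothesis that $T$ admits generic factorisations guarantees that \emph{every} component contains a generic: given $(A,f)$, factor $f=T\overline f\cdot\delta$ with $\delta$ generic, so $(A,f)$ and $(P_\delta,\delta)$ lie in the same component. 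Hence $(B\downarrow T)$ is a coproduct, indexed by $\mathfrak M_B$, of the connected components, and each component, having an initial object $(P_\delta,\delta)$, is equivalent to (in fact, via the hom-set, literally in bijection with) $\mathcal A(P_\delta,-)$ evaluated appropriately. Concretely, the map $\mathcal A(P_\delta,A)\to \mathcal B(B,TA)$ sending $h\mapsto Th\cdot\delta$ is injective by the uniqueness clause of genericness, and the images over varying $\delta\in\mathfrak M_B$ are disjoint (two generics in different components cannot factor through one another) and jointly cover $\mathcal B(B,TA)$ (by generic factorisation). Naturality in $A$ is immediate since the bijection is given by postcomposition with $Th$. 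This yields the isomorphism $\mathcal B(B,TA)\cong\sum_{\delta\in\mathfrak M_B}\mathcal A(P_\delta,A)$ natural in $A$.

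\medskip

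\noindent\textbf{(2) $\Rightarrow$ (1).} Conversely, suppose the natural family exists. Write $\iota\colon\coprod_{\delta}\mathcal A(P_\delta,-)\xRightarrow{\cong}\mathcal B(B,T-)$ for the natural isomorphism, and for each $\delta\in\mathfrak M_B$ let $\delta\colon B\to TP_\delta$ be the image under $\iota_{P_\delta}$ of the element $(\delta,1_{P_\delta})$ of the coproduct. By the Yoneda lemma, naturality forces $\iota_A(\delta,h)=Th\cdot\delta$ for all $h\colon P_\delta\to A$. I claim each such $\delta$ is $T$-generic: given a commuting square $Tg\cdot f=Th\cdot\delta$ with $f\colon B\to TC$, apply $\iota^{-1}$; since $\iota^{-1}_C(f)=(\delta',k)$ for some component $\delta'$ and $k\colon P_{\delta'}\to C$, naturality of $\iota^{-1}$ along $g$ and along $h$ (via the equation in $\mathcal B(B,TD)$) forces $\delta'=\delta$, and then the required filler is $k\colon P_\delta\to C$ with $Tk\cdot\delta=f$; uniqueness of $k$ follows from injectivity of $\iota_C$. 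Finally, that $T$ admits generic factorisations is just the surjectivity of $\iota_C$: any $f\colon B\to TC$ equals $\iota_C(\delta,\overline f)=T\overline f\cdot\delta$ for a unique component $\delta$ and morphism $\overline f\colon P_\delta\to C$.

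\medskip

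\noindent I expect the main subtlety to be the bookkeeping around connected components of the comma category in direction (1): one must check carefully that distinct generics either lie in the same component (in which case they are uniquely isomorphic and give the same representable summand) or in different components with disjoint images, so that the coproduct decomposition is well defined and independent of the chosen representatives. Once the comma-category picture is set up, both directions are essentially an application of the Yoneda lemma together with the defining universal property of generic morphisms, so no lengthy computation is needed.
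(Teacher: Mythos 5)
Your proof is correct and follows essentially the same route as the paper: your connected components of the comma category $(B\downarrow T)$ are exactly the paper's isomorphism classes of generics over $B$ (a generic being initial in its component), and your $(2)\Rightarrow(1)$ argument via naturality and the image of the identity is the same as the paper's. The only point to spell out, which you rightly flag, is the zigzag induction showing that a generic is initial in its whole component (equivalently, that any two generics in one component are uniquely isomorphic over $B$), so that the chosen representative absorbs every morphism in that component.
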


\begin{proof}
Suppose that $T$ admits generic factorisations. Call two generic
morphisms $\delta$ and $\delta'$ equivalent if there exists an isomorphism
$\alpha$ rendering commutative a diagram as below:
\[
\xymatrix@=1em{TM\ar[rr]^{T\alpha} &  & TM'\\
 & B\ar[ru]_{\delta}\ar[ul]^{\delta'}
}
\]
Now take $\mathfrak{M}_{B}$ to be the set of equivalence classes
of generic morphisms out of $B$, with each class labeled by a chosen
representative. It follows that for any $f\colon B\to TA$ we can
find a representative generic morphism $\delta_{f}$ and unique morphism
$\overline{f}$ rendering commutative
\[
\xymatrix@=1em{B\ar[rr]^{f}\ar[rd]_{\delta_{f}} &  & TA\\
 & TM\ar[ru]_{T\overline{f}}
}
\]
We note also that the representative generic $\delta_{f}$ is itself
unique (such a generic necessarily lies in the same equivalence class).
Therefore the assignment $f\mapsto\left(\delta_{f},\overline{f}\right)$
is bijective, where each $P_{\delta_{f}}$ is taken as the $M$ above.
Trivially, given a map $x\colon A\to A'$ the diagram 
\[
\xymatrix@=1.5em{B\ar[rr]^{f}\ar[rdr]_{\delta_{f}} &  & TA\ar[rr]^{Tx} &  & TA'\\
 &  & TM\ar[u]_{T\overline{f}}\ar[urr]_{T\left(x\overline{f}\right)}
}
\]
commutes, and by genericity $x\overline{f}$ is the unique such map
making the outside commute; thus showing naturality.

Conversely, suppose we are given such a family of isomorphisms\footnote{Here $\mathfrak{M}_B$ is an arbitrary set, so we do not use the suggestive notation $\delta$ for its elements.}
\[
\mathcal{B}\left(B,TA\right)\cong\sum_{m\in\mathfrak{M}_{B}}\mathcal{A}\left(P_{m},A\right)
\]
natural in $A\in\mathcal{A}$, where $B\in\mathcal{B}$ is given.
We first note that by naturality, the inverse assignment is necessarily
defined by
\[
\xymatrix@=0.5em{m\in\mathfrak{M}_{B} & , & P_{m}\myar{\alpha}{rr} &  & A & \mapsto & B\myar{\delta_{m}}{rr} &  & TP_{m}\myar{T\alpha}{rr} &  & TA}
\]
where $\delta_{m}$ is the morphism corresponding to the identity
at $P_{m}$. Also, this $\delta_{m}$ is generic since given any commuting
diagram as on the outside below
\[
\xymatrix{B\myar{f}{r}\ar[d]_{\delta_{m}} & TA\ar[d]^{Th}\\
TP_{m}\myard{Tg}{r}\ar@{..>}[ru]|-{T\overline{f}} & TD
}
\]
the morphism $Th\cdot f$ must correspond to the pair $\left(\delta_{m},g\right)$
under the bijection. By naturality, $f$ must factor through this
same $\delta_{m}$, and so the pair $\left(\delta_{m},\overline{f}\right)$
corresponding to $f$ is unique such that the top triangle commutes.
That $g=h\cdot\overline{f}$ is also a consequence of naturality.
It is implicit in the above argument that $T$ then admits generic
factorisations.
\end{proof}
Taking $T$ to be the composition functor, we have the following.
\begin{cor}
A bicategory $\mathscr{A}$ is generic if and only if for any triple
of objects $X,Y,Z\in\mathscr{A}$ and 1-cell $c\colon X\to Z$ the
functor
\[
\mathscr{A}_{X,Z}\left(c,-;-\right)\colon\mathscr{A}_{X,Y}\times\mathscr{A}_{Y,Z}\to\mathbf{Set}
\]
is a coproduct of representables, meaning that for any $\left(X,Y,Z,c\right)$
there exists a set $\mathfrak{M}_{c}^{X,Y,Z}$ equipped with projections
\[
\xymatrix{\left(\mathscr{A}_{X,Y}\right)_{\textnormal{ob}} & \mathfrak{M}_{c}^{X,Y,Z}\myar{r_{\left(-\right)}}{r}\myard{l_{\left(-\right)}}{l} & \left(\mathscr{A}_{Y,Z}\right)_{\textnormal{ob}}}
\]
such that for all $a\colon X\to Y$ and $b\colon Y\to Z$ we have
isomorphisms
\begin{equation}
\mathscr{A}_{X,Z}\left(c,a;b\right)\cong\sum_{m\in\mathfrak{M}_{c}^{X,Y,Z}}\mathscr{A}_{X,Y}\left(l_{m},a\right)\times\mathscr{A}_{Y,Z}\left(r_{m},b\right)\label{coprodrep}
\end{equation}
natural in $a$ and $b$. 
\end{cor}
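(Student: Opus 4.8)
The plan is to invoke the preceding proposition of Diers with $T$ taken to be, for each fixed triple $X,Y,Z\in\mathscr{A}$, the composition functor ${;}\colon\mathscr{A}_{X,Y}\times\mathscr{A}_{Y,Z}\to\mathscr{A}_{X,Z}$. By the definition of a generic bicategory, $\mathscr{A}$ is generic exactly when every such functor admits generic factorisations; so it suffices to check that, for each individual triple, condition (2) of the proposition unpacks into the coproduct-of-representables statement \eqref{coprodrep}, and then to quantify over all triples.

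First I would record that the domain category here is a product, so that $\mathcal{A}_{\textnormal{ob}} = (\mathscr{A}_{X,Y})_{\textnormal{ob}}\times(\mathscr{A}_{Y,Z})_{\textnormal{ob}}$; hence a function $P_{(-)}\colon\mathfrak{M}_c^{X,Y,Z}\to\mathcal{A}_{\textnormal{ob}}$ is precisely a pair of functions $l_{(-)},r_{(-)}$, i.e.\ the span of projections displayed in the statement, and we write $P_m = (l_m,r_m)$. Next I would use that hom-sets in a product category are products of hom-sets, so that with $A = (a,b)$ one has
\[
\mathcal{A}\left(P_m,(a,b)\right)\;=\;\mathscr{A}_{X,Y}\left(l_m,a\right)\times\mathscr{A}_{Y,Z}\left(r_m,b\right),\qquad TA = a;b,
\]
whereupon the isomorphism $\mathcal{B}(c,TA)\cong\sum_{m\in\mathfrak{M}_c^{X,Y,Z}}\mathcal{A}(P_m,A)$ supplied by the proposition becomes exactly \eqref{coprodrep}. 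Finally I would note that naturality of such an isomorphism in an object $(a,b)$ of the product category $\mathscr{A}_{X,Y}\times\mathscr{A}_{Y,Z}$ amounts, by the definitions of morphism and of natural transformation in a product category, to naturality separately in $a$ and in $b$.

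Since the proposition gives the equivalence of its conditions (1) and (2) for each fixed triple $X,Y,Z$, and since by the translation above condition (2) for that triple is the statement that $\mathscr{A}_{X,Z}(c,-;-)$ is a coproduct of representables for all $c\colon X\to Z$, taking the conjunction over all triples yields the asserted biconditional. No step presents a genuine obstacle; the only thing requiring attention is the routine bookkeeping identifying a family of representables indexed by objects of a product category with a family indexed by spans of objects, together with the attendant splitting of hom-sets and of the naturality conditions.
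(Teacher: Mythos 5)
Your proposal is correct and matches the paper's own (essentially one-line) derivation: the corollary is obtained by applying the Diers proposition to the composition functor $;\colon\mathscr{A}_{X,Y}\times\mathscr{A}_{Y,Z}\to\mathscr{A}_{X,Z}$, with the remaining work being exactly the bookkeeping you describe (splitting $P_{(-)}$ into the span $l_{(-)},r_{(-)}$, hom-sets of the product category into products, and naturality in $(a,b)$ into naturality in each variable).
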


We have defined generics as universal maps into a composite of two
1-cells; what one might call ``2-generics''. We might ask if there
is a corresponding notion for ``0-generics'' into composites of
zero 1-cells, that is, identity 1-cells. However, as for each $n\colon X\to X$
the functor
\[
\mathscr{A}_{X,X}\left(n,1_{X}\right)\colon\mathbf{1}\to\mathbf{Set}
\]
is trivially a coproduct of representables, there is no condition
to impose on these 2-cells, and so any 2-cell $\epsilon\colon n\to1_{X}$
may be regarded as a ``0-generic''. Regardless, these 2-cells still
have an interesting property; they are unique.
\begin{prop}
Suppose $\mathscr{A}$ is a generic bicategory. Then for each $X\in\mathscr{A}$,
the identity 1-cell $1_{X}$ is sub-terminal in $\mathscr{A}_{X,X}$.
\end{prop}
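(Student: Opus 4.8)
The plan is to deduce the statement from the Corollary above together with the elementary fact that a representable functor is connected, i.e.\ cannot be written as a coproduct of functors in a nontrivial way. Fix $X\in\mathscr{A}$ and abbreviate $\mathscr{A}_{X,X}$ by $\mathcal{M}$; recall that $\left(\mathcal{M},;,1_{X}\right)$ is a monoidal category, so in particular the unit constraints of $\mathscr{A}$ furnish natural isomorphisms $a;1_{X}\cong a$ and $1_{X};a\cong a$ for $a\in\mathcal{M}$. Since $\mathscr{A}$ is generic, the composition functor $\mathscr{A}_{X,X}\times\mathscr{A}_{X,X}\to\mathscr{A}_{X,X}$ (the case $X=Y=Z$) admits generic factorisations, so the Corollary yields, for each $n\in\mathcal{M}$, a set $\mathfrak{M}_{n}:=\mathfrak{M}_{n}^{X,X,X}$ equipped with legs $l_{\left(-\right)},r_{\left(-\right)}$ and an isomorphism
\[
\mathcal{M}\left(n,a;b\right)\;\cong\;\sum_{m\in\mathfrak{M}_{n}}\mathcal{M}\left(l_{m},a\right)\times\mathcal{M}\left(r_{m},b\right)
\]
natural in $a,b\in\mathcal{M}$. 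It suffices to show that $\mathcal{M}\left(n,1_{X}\right)$ has at most one element for every $n$.

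First I would put $b=1_{X}$ and compose with the natural isomorphism $\mathcal{M}\left(n,a;1_{X}\right)\cong\mathcal{M}\left(n,a\right)$ induced by $a;1_{X}\cong a$, obtaining a natural isomorphism
\[
\mathcal{M}\left(n,-\right)\;\cong\;\sum_{m\in\mathfrak{M}_{n}}\mathcal{M}\left(r_{m},1_{X}\right)\times\mathcal{M}\left(l_{m},-\right)
\]
which exhibits the representable $\mathcal{M}\left(n,-\right)$ as a coproduct of the representables $\mathcal{M}\left(l_{m},-\right)$, the copy for $m$ being repeated once for each $\phi\in\mathcal{M}\left(r_{m},1_{X}\right)$. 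Transporting $\mathrm{id}_{n}$ across the $n$-component of this isomorphism selects one pair $\left(m_{0},\phi_{0}\right)$, and naturality then forces every element of every $\mathcal{M}\left(n,d\right)$ to be sent into the corresponding summand $\mathcal{M}\left(l_{m_{0}},d\right)$; since each $\mathcal{M}\left(l_{m},-\right)$ is nonempty (evaluate at $l_{m}$), the indexing pairs must collapse to the single pair $\left(m_{0},\phi_{0}\right)$. Hence $m_{0}$ is the unique $m\in\mathfrak{M}_{n}$ with $\mathcal{M}\left(r_{m},1_{X}\right)\neq\emptyset$, and for that $m_{0}$ the set $\mathcal{M}\left(r_{m_{0}},1_{X}\right)$ is a singleton. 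The symmetric argument, with $a=1_{X}$ and $1_{X};b\cong b$, gives the unique $m_{1}\in\mathfrak{M}_{n}$ with $\mathcal{M}\left(l_{m},1_{X}\right)\neq\emptyset$, and again $\mathcal{M}\left(l_{m_{1}},1_{X}\right)$ is a singleton.

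To finish, I would put $a=b=1_{X}$ and precompose with a unit isomorphism $1_{X}\cong1_{X};1_{X}$, which gives
\[
\mathcal{M}\left(n,1_{X}\right)\;\cong\;\sum_{m\in\mathfrak{M}_{n}}\mathcal{M}\left(l_{m},1_{X}\right)\times\mathcal{M}\left(r_{m},1_{X}\right).
\]
A summand here is nonempty exactly when both of its factors are, which by the previous paragraph forces $m=m_{1}$ and $m=m_{0}$ simultaneously; so at most one summand is nonempty, and in that case it is a product of two singletons. Therefore $\mathcal{M}\left(n,1_{X}\right)$ has at most one element, which is precisely the assertion that $1_{X}$ is sub-terminal in $\mathscr{A}_{X,X}$.

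The step I expect to be the main obstacle is the middle one: verifying cleanly that the coproduct collapses to a single summand (the ``connectedness'' of a representable functor) and that this produces genuine singletons $\mathcal{M}\left(r_{m_{0}},1_{X}\right)$ and $\mathcal{M}\left(l_{m_{1}},1_{X}\right)$, rather than merely families concentrated at one index. Everything else amounts to inserting the unit constraints and bookkeeping with coproducts in $\mathbf{Set}$.
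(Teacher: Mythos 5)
Your argument is correct, but it takes a genuinely different route from the paper. The paper argues directly with the filler property of generics: given two augmentations $s,t\colon n\to1_{X}$, it factors the two unitors $n\to1_{X};n$ and $n\to n;1_{X}$ through generics $\delta_{1}$ and $\delta_{2}$, observes that the resulting two commuting squares share the same top edge and the same (generic) left edge, so their unique fillers coincide (uniqueness being forced by the top triangle alone), and then reads off $s=h\theta=t$ from the component-wise commutativity of the bottom triangles. You instead work in the equivalent familial-representability form of the Corollary and invoke connectedness of representables: setting one slot to $1_{X}$ and composing with a unitor exhibits $\mathscr{A}_{X,X}\left(n,-\right)$ as a coproduct of representables, which must collapse to a single (singleton-indexed) summand, and then evaluating at $a=b=1_{X}$ counts $\mathscr{A}_{X,X}\left(n,1_{X}\right)$. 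Your ``main obstacle'' step is in fact fine: transporting $\mathrm{id}_{n}$, using that every element of $\mathscr{A}_{X,X}\left(n,d\right)$ is its image under $\mathscr{A}_{X,X}\left(n,f\right)$, and evaluating the would-be extra summands at their own representing objects is the standard and complete argument. The paper's proof is shorter and stays in the language used throughout the rest of the section (generics and fillers); yours is more conceptual, makes visible the reason the statement holds (representables are connected, so the coproduct decomposition degenerates), and incidentally establishes the essential uniqueness of the generic factorisations of the unitors, which the paper only extracts later in Lemma \ref{cohunit}.
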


\begin{proof}
Given a morphism $n\colon X\to X$ and two 2-cells $s,t\colon n\to1_{X}$
we have two commuting squares
\[
\xymatrix{n\myar{\delta_{1}}{r}\ar[d]_{\delta_{2}} & l;n\ar[d]^{h;s} &  & n\myar{\delta_{1}}{r}\ar[d]_{\delta_{2}} & l;n\ar[d]^{h;t}\\
n;r\myard{s;k}{r}\ar@{..>}[ru]|-{\theta;\phi} & 1_{X};1_{X} &  & n;r\myard{t;k}{r}\ar@{..>}[ru]|-{\theta;\phi} & 1_{X};1_{X}
}
\]
where $\delta_{1}$ and $h\colon l\to1_{X}$ are given by factorizing
the unitor $n\to1_{X};n$ through a generic, and $\delta_{2}$ and
$k\colon r\to1_{X}$ are given by factorizing the other unitor $n\to n;1_{X}$.
Now both of these squares admit a unique filler, and moreover both
these fillers must be equal as uniqueness is forced by the top left
triangles; we denote this filler $\theta;\phi$. Equating the left
components of the bottom right triangles we then find $s=h\theta=t$.
\end{proof}
It will be useful to give such 2-cells a name as they still play an
important role, despite the lack of a non-trivial universal property.
\begin{defn}
We call any 2-cell of the form $\epsilon\colon n\to1_{X}$ in a bicategory
$\mathscr{A}$ an \emph{augmentation}.
\end{defn}

\subsection{Coherence of generics}

The following two lemmata show that there exists ``nice'' choices
of generics. This will later be useful in regard to stating and checking
coherence conditions.
\begin{lem}
\label{cohunit} Suppose $\mathscr{A}$ is a generic bicategory. Then
for any factorization of a left unitor at a 1-cell $c\colon X\to Y$
through a generic $\delta$ as below
\begin{equation}
\xymatrix@=1em{ & l;r\ar[dr]^{\theta;\phi}\\
c\myard{\textnormal{unitor}}{rr}\ar[ur]^{\delta} &  & 1_{X};c
}
\label{unitlemma}
\end{equation}
the induced 2-cell $\phi$ is invertible. 
\end{lem}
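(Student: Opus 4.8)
The plan is to read off the invertibility of $\phi$ from the coproduct-of-representables description of generic bicategories (the Corollary above) together with the fact that $1_{X}$ is sub-terminal (the Proposition above). The idea is that factoring the left unitor through a generic exhibits the representable functor $\mathscr{A}_{X,Y}\left(c,-\right)$ as a coproduct of representables, which, representables being connected, must collapse to a single term; $\phi$ is then the resulting comparison isomorphism.

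Concretely, I would work with the composition functor $;\colon\mathscr{A}_{X,X}\times\mathscr{A}_{X,Y}\to\mathscr{A}_{X,Y}$ (taking the intermediate object to be $X$ itself), which admits generic factorisations since $\mathscr{A}$ is generic, and apply the Corollary to the datum $\left(X,X,Y,c\right)$. The key simplifying move is to choose the \emph{given} generic $\delta\colon c\to l;r$ as the chosen representative of its equivalence class $m_{0}\in\mathfrak{M}:=\mathfrak{M}_{c}^{X,X,Y}$, so that $l_{m_{0}}=l$ and $r_{m_{0}}=r$; since the factorisation of a $2$-cell through a prescribed generic is unique, the Corollary isomorphism
\[
\Psi_{b}\colon\mathscr{A}_{X,Y}\left(c,1_{X};b\right)\;\cong\;\sum_{m\in\mathfrak{M}}\mathscr{A}_{X,X}\left(l_{m},1_{X}\right)\times\mathscr{A}_{X,Y}\left(r_{m},b\right)
\]
then sends the left unitor $\lambda_{c}\in\mathscr{A}_{X,Y}\left(c,1_{X};c\right)$ to the triple $\left(m_{0},\theta,\phi\right)$ coming from the given factorisation \eqref{unitlemma}. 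Precomposing with the left-unitor bijection $\lambda_{b}\circ\left(-\right)\colon\mathscr{A}_{X,Y}\left(c,b\right)\cong\mathscr{A}_{X,Y}\left(c,1_{X};b\right)$ (which is natural in $b$ by naturality of the left unitor) yields natural isomorphisms
\[
\Theta_{b}\colon\mathscr{A}_{X,Y}\left(c,b\right)\;\cong\;\sum_{m\in\mathfrak{M}}\mathscr{A}_{X,X}\left(l_{m},1_{X}\right)\times\mathscr{A}_{X,Y}\left(r_{m},b\right),\qquad\Theta_{c}\left(1_{c}\right)=\left(m_{0},\theta,\phi\right).
\]

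Now the right-hand side, as a functor of $b$, is a coproduct of representables $\mathscr{A}_{X,Y}\left(r_{m},-\right)$ indexed by pairs $\left(m,\epsilon\right)$ with $\epsilon\colon l_{m}\to 1_{X}$ an augmentation; by the Proposition above there is at most one such $\epsilon$ per $m$, so after discarding the $m$ admitting no augmentation every summand is a non-empty representable. Since $\mathscr{A}_{X,Y}\left(c,-\right)$ is itself representable, it is connected in $\left[\mathscr{A}_{X,Y},\mathbf{Set}\right]$: by Yoneda any natural transformation out of it into a coproduct factors through a single coprojection, and if it is invertible that coprojection is split epic as well as (pointwise) monic, hence invertible, so the remaining summands are empty. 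Therefore the index set is the singleton $\left\{ \left(m_{0},\theta\right)\right\} $ (it contains $\left(m_{0},\theta\right)$ because $\theta$ exists), and $\Theta$ becomes a natural isomorphism $\mathscr{A}_{X,Y}\left(c,-\right)\cong\mathscr{A}_{X,Y}\left(r,-\right)$ with $\Theta_{c}\left(1_{c}\right)=\phi$. Finally, putting $\psi:=\Theta_{r}^{-1}\left(1_{r}\right)\colon c\to r$ and applying naturality of $\Theta$ along $\psi$ and along $\phi$ gives $\Theta_{r}\left(\psi\right)=\psi\circ\phi$ and $\Theta_{c}\left(\phi\circ\psi\right)=\phi$; since $\Theta_{r}\left(\psi\right)=1_{r}$ and $\Theta_{c}$ is injective with $\Theta_{c}\left(1_{c}\right)=\phi$, we get $\psi\circ\phi=1_{r}$ and $\phi\circ\psi=1_{c}$, so $\phi$ is invertible.

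The only non-formal ingredient is the connectedness of representable functors (standard via Yoneda), and the main thing to get right is the bookkeeping of the two natural isomorphisms; the payoff of choosing $\delta$ as its own class representative is that it makes $\phi$ \emph{literally} the comparison map $\Theta_{c}\left(1_{c}\right)$, so no appeal to essential uniqueness of generic factorisations is needed.
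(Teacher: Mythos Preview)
Your argument is correct, but it is a genuinely different proof from the paper's. The paper proceeds by writing down an explicit candidate inverse
\[
\phi^{\ast}\;:=\;\bigl(\textnormal{unitor}\bigr)\circ\bigl(\theta;r\bigr)\circ\delta\;\colon\;c\to r,
\]
checks directly from the factorisation~\eqref{unitlemma} and naturality of the unitor that $\phi^{\ast}$ post-composed with $\phi$ is $1_{c}$, and then applies the universal property of the generic $\delta$ a single time (to the square with right leg $1_{X};\phi^{\ast}$) to read off from the second component of the unique filler that $\phi$ followed by $\phi^{\ast}$ is $1_{r}$. No appeal is made to either the coproduct-of-representables Corollary or the sub-terminality Proposition.

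Your route is more structural: you recast the problem as showing that a certain natural transformation between representables is invertible, using the familial-representability description together with sub-terminality of $1_{X}$ to collapse the coproduct to a single summand via connectedness of $\mathscr{A}_{X,Y}\left(c,-\right)$. This makes transparent \emph{why} the lemma holds---factoring the unitor through a generic is exactly picking out a summand in the familial decomposition, and there is only one---but it costs you two auxiliary results and the Yoneda/connectedness bookkeeping, whereas the paper's argument is entirely self-contained and moreover hands you an explicit formula for $\phi^{-1}$.
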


\begin{proof}
Define $\phi^{\ast}\colon c\to r$ to be the composite
\[
\xymatrix{c\myar{\delta}{r} & l;r\myar{\theta;r}{r} & 1_{X};r\myar{\textnormal{unitor}}{r} & r}
\]
and note that when this is post-composed by $\phi$ we recover the
identity 2-cell at $c$, by commutativity of the diagram \ref{unitlemma}
and naturality of unitors. We also note that by naturality of unitors
the diagram
\[
\xymatrix{c\myar{\textnormal{unitor}}{r}\ar[d]_{\delta} & 1_{X};c\ar[d]^{1_{X};\phi^{\ast}}\\
l;r\myard{\theta;r}{r}\ar@{..>}[ur]|-{\theta;\phi} & 1_{X};r
}
\]
 commutes and thus admits a filler such that both triangles commute.
Moreover, we note that as uniqueness is forced by the top triangle
this filler must be $\theta;\phi$. Equating the second components
of the bottom right triangle we have established $\phi$ followed
by $\phi^{\ast}$ as being the identity. 
\end{proof}
\begin{rem}
As $\phi$ is invertible above, composing the generic $\delta$ with
$\phi$ still yields a generic. This shows that there exists ``nice''
generics $c\to l;c$ and augmentations $l\to1_{X}$ which compose
to the unitor. Moreover, it is clear this may be similarly done for
right unitors.
\end{rem}

\begin{lem}
\label{cohassoc} Suppose $\mathscr{A}$ is a generic bicategory.
Let $W,X,Y,Z$ be objects in $\mathscr{A}$, let $T$ be the functor
given by composition
\[
\left(\mathscr{A}_{W,X}\times\mathscr{A}_{X,Y}\right)\times\mathscr{A}_{Y,Z}\to\mathscr{A}_{W,Y}\times\mathscr{A}_{Y,Z}\to\mathscr{A}_{W,Z}
\]
and consider 1-cells
\[
d\colon W\to Z,\qquad l\colon W\to X,\qquad m\colon X\to Y,\qquad r\colon Y\to Z.
\]
Then a 2-cell $d\to\left(l;m\right);r$ in $\mathscr{A}$ is $T$-generic
if and only if it has the form
\[
\xymatrix{d\myar{\delta_{1}}{r} & h;r\myar{\delta_{2};r}{r} & \left(l;m\right);r}
\]
for a pair of generics $\delta_{1}$ and $\delta_{2}$.
\end{lem}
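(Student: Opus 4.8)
The plan is to regard $T$ as the composite $G\circ F$ of the two functors appearing in its definition, so that $F\colon(\mathscr{A}_{W,X}\times\mathscr{A}_{X,Y})\times\mathscr{A}_{Y,Z}\to\mathscr{A}_{W,Y}\times\mathscr{A}_{Y,Z}$ is ``composition in the first two variables'' (that is, ${;}\times\mathrm{id}$) and $G\colon\mathscr{A}_{W,Y}\times\mathscr{A}_{Y,Z}\to\mathscr{A}_{W,Z}$ is composition, and to deduce the statement from three routine generalities about generic morphisms: \textbf{(a)} genericity composes, so that if $\gamma$ is $G$-generic and $\delta$ is $F$-generic with $\mathrm{cod}\,\gamma=G(\mathrm{dom}\,\delta)$ then $G\delta\circ\gamma$ is $GF$-generic; \textbf{(b)} for a product functor $F_{1}\times F_{2}$ a morphism $(\phi_{1},\phi_{2})$ is generic precisely when each $\phi_{i}$ is $F_{i}$-generic, so that in particular the $\mathrm{id}$-generics are exactly the isomorphisms; \textbf{(c)} post-composing a $T$-generic with $T$ of an isomorphism yields a $T$-generic, and the generic part of a morphism is essentially unique, i.e.\ if $\delta$ and $\delta'$ are both $T$-generic and $T\psi\circ\delta=\delta'$ then $\psi$ is invertible. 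The last point is exactly the cancellation argument already used implicitly in the proof of the Diers proposition above, and (a), (b) are standard.

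Granting (a)--(c), the ``if'' direction is immediate: given generics $\delta_{1}\colon d\to h;r$ and $\delta_{2}\colon h\to l;m$, the pair $(\delta_{2},1_{r})$ is $F$-generic by (b), hence $(\delta_{2};r)\circ\delta_{1}=G(\delta_{2},1_{r})\circ\delta_{1}$ is $T$-generic by (a).

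For the ``only if'' direction, suppose $\sigma\colon d\to(l;m);r$ is $T$-generic. Since $\mathscr{A}$ is generic, both $G$ and the inner composition functor $\mathscr{A}_{W,X}\times\mathscr{A}_{X,Y}\to\mathscr{A}_{W,Y}$ admit generic factorisations, so I would first factor $\sigma=(\bar\sigma_{1};\bar\sigma_{2})\circ\delta_{1}$ with $\delta_{1}\colon d\to h;s$ a $G$-generic and $(\bar\sigma_{1},\bar\sigma_{2})\colon(h,s)\to(l;m,r)$, and then factor $\bar\sigma_{1}=(u;v)\circ\delta_{2}$ with $\delta_{2}\colon h\to p;q$ generic and $(u,v)\colon(p,q)\to(l,m)$. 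Functoriality of $;$ rewrites these as $\sigma=\big((u;v);\bar\sigma_{2}\big)\circ(\delta_{2};1_{s})\circ\delta_{1}=T\big((u,v),\bar\sigma_{2}\big)\circ\tau$, where $\tau:=(\delta_{2};1_{s})\circ\delta_{1}$ is again $T$-generic by (a) and (b). Since $\tau$ and $\sigma$ are both $T$-generic and related by $T$ of the single morphism $((u,v),\bar\sigma_{2})$, the uniqueness clause of (c) forces that morphism to be invertible; hence $u$, $v$ and $\bar\sigma_{2}$ are isomorphisms. Finally I would absorb these into the two generics by setting $\delta_{1}^{\ast}:=(1_{h};\bar\sigma_{2})\circ\delta_{1}$ and $\delta_{2}^{\ast}:=(u;v)\circ\delta_{2}$ — still generic for the appropriate composition functors by (b) and (c) — and then verify, using the interchange law (with the identity $(\delta_{2};1_{r})\circ(1_{h};\bar\sigma_{2})=(\delta_{2};\bar\sigma_{2})=(1_{p;q};\bar\sigma_{2})\circ(\delta_{2};1_{s})$ doing the work), that $(\delta_{2}^{\ast};r)\circ\delta_{1}^{\ast}=\sigma$, which is the asserted form.

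I expect the main obstacle to be the ``only if'' direction, and within it the step passing from ``$\sigma$ equals $T$ of some morphism composed with a composite of generics'' to ``$\sigma$ is itself such a composite''. This is precisely where essential uniqueness of generic factorisations (fact (c)) is indispensable, and it has to be combined with the routine but somewhat fiddly bookkeeping of whiskerings and the interchange law needed to slide the isomorphisms $u$, $v$, $\bar\sigma_{2}$ back inside $\delta_{1}$ and $\delta_{2}$. The remaining ingredients are standard, so the actual work is in organizing this reduction cleanly.
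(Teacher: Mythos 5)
Your proposal is correct and follows essentially the same route as the paper: the ``only if'' direction factors the $T$-generic first through a $;$-generic and then factors the resulting first component through an inner generic, invokes essential uniqueness of generic factorisations to see the comparison 2-cells are invertible, and absorbs them into the two generics, exactly as in the paper's proof. The only cosmetic difference is that your ``if'' direction appeals to the general facts that generics compose along composite functors and split along product functors, which the paper verifies by hand in this instance and records as a remark immediately after the lemma.
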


\begin{proof}
Suppose we are given generics $\delta_{1}$ and $\delta_{2}$ composable
as in the diagram on the left below
\[
\xymatrix@=1em{d\ar[d]_{\delta_{1}}\myar{\gamma}{rr} &  & \left(a;b\right);c\ar[dd]^{\left(\beta_{1};\beta_{2}\right);\beta_{3}} &  &  &  & h\myar{\gamma_{1}}{rr}\ar[dd]_{\delta_{2}} &  & a;b\ar[dd]^{\beta_{1};\beta_{2}}\\
h;r\ar[d]_{\delta_{2};r}\ar@{..>}[urr]_{\gamma_{1};\gamma_{2}}\\
\left(l;m\right);r\myard{\left(\alpha_{1};\alpha_{2}\right);\alpha_{3}}{rr} &  & \left(f;g\right);h &  &  &  & l;m\myard{\alpha_{1};\alpha_{2}}{rr}\ar@{..>}[uurr]|-{\zeta_{1};\zeta_{2}} &  & f;g
}
\]
where $\alpha_{1},\alpha_{2},\alpha_{3},\beta_{1},\beta_{2},\beta_{3}$
and $\gamma$ are arbitrary 2-cells such that the outside diagram
commutes. Then there exists a filler $\gamma_{1};\gamma_{2}$ splitting
the diagram into two commuting regions, by genericity of $\delta_{1}$.
Moreover, there exists a filler $\zeta_{1};\zeta_{2}$ for the commuting
diagram on the right above as $\delta_{2}$ is generic. We thus have
a diagonal filler $\left(\zeta_{1};\zeta_{2}\right);\gamma_{2}$ for
the diagram on the left above. For uniqueness, suppose we are given
another filler $\left(\zeta_{1}';\zeta_{2}'\right);\gamma_{2}'$ and
note that since $\delta_{1}$ is generic, we have $\left[\left(\zeta_{1}';\zeta_{2}'\right)\circ\delta_{2}\right];\gamma_{2}'=\gamma_{1};\gamma_{2}$
component wise. Hence $\gamma_{2}'=\gamma_{2}$ and $\left(\zeta_{1}';\zeta_{2}'\right)\circ\delta_{2}=\gamma_{1}$.
Since $\delta_{2}$ is generic it follows that $\zeta_{1}'=\zeta_{1}$
and $\zeta_{2}'=\zeta_{2}$.

Conversely, suppose we are given a 2-cell $\delta\colon d\to\left(l;m\right);r$
which is $T$-generic. Now, we know that the $T$-generic $\delta$
can be factored through a generic $\delta_{1}$ giving the triangle
on the left below
\[
\xymatrix{d\myar{\delta_{1}}{r}\ar[d]_{\delta} & h';r'\myar{\delta_{2};r'}{r}\ar@{..>}[dl]|-{\alpha;\beta} & \left(l';m'\right);r'\ar@{..>}[dll]^{\left(\gamma_{1};\gamma_{2}\right);\beta}\\
\left(l;m\right);r
}
\]
and the 2-cell $\alpha$ can be factored through a generic $\delta_{2}$
yielding the right triangle above. In particular, the components of
$\left(\gamma_{1};\gamma_{2}\right);\beta$ are invertible as this
is an induced isomorphism of $T$-generic morphisms \cite[Lemma 5.7]{WeberGeneric}.
Hence upon taking $\delta_{1}^{\ast}$ to be $\delta_{1}$ pasted
with $\beta$, and $\delta_{2}^{\ast}$ to be $\delta_{2}$ pasted
with $\gamma_{1};\gamma_{2}$, we see that $\delta$ is a pasting
of generics $\delta_{1}^{\ast}$ and $\delta_{2}^{\ast}$.
\end{proof}
\begin{rem}
The above lemma is an instance of a more general fact: if $\delta_{1}\colon C\to SB$
is $S$-generic and $\delta_{2}\colon B\to TA$ is $T$-generic, then
\[
\xymatrix{C\ar[r]^{\delta_{1}} & SB\ar[r]^{S\delta_{2}} & STA}
\]
 is $ST$-generic. Moreover, if both $S$ and $T$ admit generic factorisations
then all $ST$-generics have this form.
\end{rem}

\begin{rem}
Clearly, we can state and prove an analogue of the above lemma if
we replace $T$ by the functor $S$ given as the composite
\[
\mathscr{A}_{W,X}\times\left(\mathscr{A}_{X,Y}\times\mathscr{A}_{Y,Z}\right)\to\mathscr{A}_{W,Y}\times\mathscr{A}_{Y,Z}\to\mathscr{A}_{W,Z}
\]
It is also clear that given a composite of generics
\[
\xymatrix{d\myar{\delta_{1}}{r} & h;r\myar{\delta_{2};r}{r} & \left(l;m\right);r}
\]
which is $T$-generic, that the composite
\[
\xymatrix{d\myar{\delta_{1}}{r} & h;r\myar{\delta_{2};r}{r} & \left(l;m\right);r\myar{\textnormal{assoc}}{r} & l;\left(m;r\right)}
\]
is $S$-generic, and hence by the analogue of the above lemma we may
write this composite as
\[
\xymatrix{d\myar{\delta_{3}}{r} & l;k\myar{l;\delta_{4}}{r} & l;\left(m;r\right)}
\]
for some pair of generics $\delta_{3}$ and $\delta_{4}$.
\end{rem}

It is sometimes advantageous to not consider all generics, but only
a smaller class of generics satisfying some coherence properties outlined
in the following definition.
\begin{defn}
Let $\mathscr{A}$ be a generic bicategory. Let $\Delta_{2}$ and
$\Delta_{0}$ be given collections of generics and augmentations in
$\mathscr{A}$ respectively. Denote by $\Omega_{2}$ the set of domains
of the generics in $\Delta_{2}$. We say the pair $\left(\Delta_{2},\Delta_{0}\right)$
is \emph{coherent }if:
\begin{enumerate}
\item (completeness of generics) for every generic $\delta'\colon c'\to l';r'$
in $\mathscr{A}$ there exists a generic $\delta\colon c\to l;r$
in $\Delta_{2}$ and isomorphisms $\zeta_{1},\zeta_{2}$ and $\zeta$
rendering commutative
\[
\xymatrix{c\myar{\delta}{r}\ar[d]_{\zeta} & l;r\ar[d]^{\zeta_{1};\zeta_{2}}\\
c'\myard{\delta'}{r} & l';r'
}
\]
\item (completeness of augmentations) for every augmentation $\epsilon'\colon n'\to1_{X}$
in $\mathscr{A}$ there exists an augmentation $\epsilon\colon n\to1_{X}$
in $\Delta_{0}$ and isomorphism $\xi\colon n\to n'$ rendering commutative
\[
\xymatrix@=1em{n\myar{\xi}{rr}\ar[rd]_{\epsilon} &  & n'\ar[ld]^{\epsilon'}\\
 & 1_{X}
}
\]
\item (associator coherence) for all generics $\delta_{1},\delta_{2}\in\Delta_{2}$
composable as below, there exists generics $\delta_{3},\delta_{4}\in\Delta_{2}$
rendering commutative
\[
\xymatrix@=1em{c\ar[d]_{\delta_{3}}\ar@{=}[rr] &  & c\ar[d]^{\delta_{1}}\\
l;k\myard{l;\delta_{4}}{d} &  & h;r\ar[d]^{\delta_{2};r}\\
l;\left(m;r\right)\myard{\textnormal{assoc}}{rr} &  & \left(l;m\right);r
}
\]
\item (left unitor coherence) for all $c\colon X\to Y$ in $\Omega_{2}$
there exists a $\delta\in\Delta_{2}$ and $\epsilon\in\Delta_{0}$
composable as below and rendering commutative
\[
\xymatrix@=1em{ & n;c\ar[dr]^{\epsilon;c}\\
c\myard{\textnormal{unitor}}{rr}\ar[ur]^{\delta} &  & 1_{X};c
}
\]
\item (right unitor coherence) for all $c\colon X\to Y$ in $\Omega_{2}$
there exists a $\delta\in\Delta_{2}$ and $\epsilon\in\Delta_{0}$
composable as below and rendering commutative
\[
\xymatrix@=1em{ & c;n\ar[dr]^{c;\epsilon}\\
c\myard{\textnormal{unitor}}{rr}\ar[ur]^{\delta} &  & c;1_{Y}
}
\]
\end{enumerate}
\end{defn}

\begin{rem}
If $\mathscr{A}$ is generic, we may always take $\left(\Delta_{2},\Delta_{0}\right)$
to be the class of all generic 2-cells and augmentations. This is
a consequence of the previous two lemmata.
\end{rem}

\begin{rem}
Informally, the conditions (3) to (5) guarantee that each 1-cell $c\in\Omega_{2}$
admits the structure of an ``$\mathscr{A}$-comonoid''; a simple
example of this being that objects in cartesian monoidal categories
admit the structure of a comonoid.
\end{rem}

\subsection{Functors which respect generics}

It is well known that to give an oplax functor $L\colon\mathbf{1}\to\mathscr{C}$
is to give a comonad in $\mathscr{C}$. The following theorem generalizes
this fact, replacing the terminal category by any generic bicategory
$\mathscr{A}$.

At the same time, the following theorem may be seen as a coherence
result; it provides a reduction in the data of an oplax functor out
of such an $\mathscr{A}$, showing that the coherence data of such
an oplax functor is completely determined by the data at the diagonals.

The most important property of this result however is that it provides
a description of oplax functors $L\colon\mathscr{A}\to\mathscr{C}$
out of generic bicategories $\mathscr{A}$ which does not involve
composition in the domain bicategory; by this we mean expressions
of the form $L\left(a;b\right)$ or $L\left(1_{X}\right)$ do not
appear in our description below.

For completeness, we also give a reduced description of oplax natural
transformations and icons \cite{icons} between such oplax functors.
\begin{thm}
\label{generalcoherence} Let $\mathscr{A}$ and $\mathscr{C}$ be
bicategories, and suppose $\mathscr{A}$ is generic. Suppose we are
given a coherent class $\left(\Delta_{2},\Delta_{0}\right)$ of generics
and augmentations of $\mathscr{A}$. Then given a locally defined
functor
\[
L_{X,Y}\colon\mathscr{A}_{X,Y}\to\mathscr{C}_{LX,LY},\qquad X,Y\in\mathscr{A}
\]
the following data are in bijection:
\begin{enumerate}
\item for every pair of composable 1-cells $a$ and $b$, a constraint 2-cell
\[
\varphi_{a,b}\colon L\left(a;b\right)\to L\left(a\right);L\left(b\right)
\]
and for every identity 1-cell $1_{X}$, a constraint 2-cell 
\[
\lambda_{X}\colon L\left(1_{X}\right)\to1_{LX}
\]
exhibiting $L$ as an oplax functor;
\item for every generic $\delta\colon c\to l;r$ in $\Delta_{2}$, a comultiplication
2-cell
\[
\Phi_{\delta}\colon L\left(c\right)\to L\left(l\right);L\left(r\right)
\]
and for every augmentation $\epsilon\colon n\to1_{X}$ in $\Delta_{0}$,
a counit 2-cell
\[
\Lambda_{\epsilon}\colon L\left(n\right)\to1_{LX}
\]
satisfying the following coherence axioms:
\begin{enumerate}
\item (naturality of comultiplication) for any 2-cell $\zeta\colon c\to c'$
and commuting diagram as on the left below\footnote{The 2-cells $\zeta_1$ and $\zeta_2$ are then induced by the genericity of $\delta_1$.}
with $\delta_{1},\delta_{2}\in\Delta_{2}$
\[
\xymatrix{ & c\myar{\delta_{1}}{r}\ar[d]_{\zeta} & l;r\ar[d]^{\zeta_{1};\zeta_{2}} &  & Lc\myar{\Phi_{\delta_{1}}}{r}\ar[d]_{L\zeta} & Ll;Lr\ar[d]^{L\zeta_{1};L\zeta_{2}}\\
 & c'\myard{\delta_{2}}{r} & l';r' &  & Lc'\myard{\Phi_{\delta_{2}}}{r} & Ll';Lr'
}
\]
the diagram on the right above commutes;
\item (naturality of counits) for any 2-cell $\xi\colon n\to n'$ and pair
of augmentations $\epsilon\colon n\to1_{X}$ and $\epsilon'\colon n'\to1_{X}$
in $\Delta_{0}$ giving a commuting diagram as on the left below
\[
\xymatrix@=1em{ &  & n\myar{\xi}{rr}\ar[rd]_{\epsilon} &  & n'\ar[ld]^{\epsilon'} &  &  & Ln\myar{L\xi}{rr}\ar[rd]_{\Lambda_{\epsilon}} &  & Ln'\ar[ld]^{\Lambda_{\epsilon'}}\\
 &  &  & 1_{X} &  &  &  &  & 1_{LX}
}
\]
the diagram on the right above commutes;
\item (associativity of comultiplication) for every $\delta_{1},\delta_{2},\delta_{3},\delta_{4}\in\Delta_{2}$
yielding an equality as on the left below
\[
\xymatrix@=1em{ &  & c\ar[d]_{\delta_{3}}\ar@{=}[rr] &  & c\ar[d]^{\delta_{1}} &  & Lc\ar[d]_{\Phi_{\delta_{3}}}\ar@{=}[rr] &  & Lc\ar[d]^{\Phi_{\delta_{1}}}\\
 &  & l;k\myard{l;\delta_{4}}{d} &  & h;r\ar[d]^{\delta_{2};r} &  & Ll;Lk\myard{Ll;\Phi_{\delta_{4}}}{d} &  & Lh;Lr\ar[d]^{\Phi_{\delta_{2}};Lr}\\
 &  & l;\left(m;r\right)\myard{\textnormal{assoc}}{rr} &  & \left(l;m\right);r &  & Ll;\left(Lm;Lr\right)\myard{\textnormal{assoc}}{rr} &  & \left(Ll;Lm\right);Lr
}
\]
the diagram on the right above commutes;
\item (left counit axiom) for any 1-cell $c\colon X\to Y$, generic $\delta\in\Delta_{2}$
and augmentation $\epsilon\in\Delta_{0}$ yielding an equality as
on the left below 
\[
\xymatrix@=1em{ &  &  & n;c\ar[dr]^{\epsilon;c} &  &  &  &  & Ln;Lc\ar[dr]^{\Lambda_{\epsilon};Lc}\\
 &  & c\myard{\textnormal{unitor}}{rr}\ar[ur]^{\delta} &  & 1_{X};c &  &  & Lc\myard{\textnormal{unitor}}{rr}\ar[ur]^{\Phi_{\delta}} &  & 1_{LX};Lc
}
\]
the diagram on the right above commutes;
\item (right counit axiom) for any 1-cell $c\colon X\to Y$, generic $\delta\in\Delta_{2}$
and augmentation $\epsilon\in\Delta_{0}$ yielding an equality as
on the left below 
\[
\xymatrix@=1em{ &  &  & c;n\ar[dr]^{c;\epsilon} &  &  &  &  & Lc;Ln\ar[dr]^{Lc;\Lambda_{\epsilon}}\\
 &  & c\myard{\textnormal{unitor}}{rr}\ar[ur]^{\delta} &  & c;1_{Y} &  &  & Lc\myard{\textnormal{unitor}}{rr}\ar[ur]^{\Phi_{\delta}} &  & Lc;1_{LY}
}
\]
the diagram on the right above commutes.
\end{enumerate}
\end{enumerate}
Suppose now we are given a locally defined functor $L$ equipped with
a collection $\left(\varphi,\lambda\right)$ as in (1), or equivalently
equipped with a collection $\left(\Phi,\Lambda\right)$ as in (2).
Denote this data by the 5-tuple $\left(L,\varphi,\Phi,\lambda,\Lambda\right)$
whilst noting the collections $\left(\varphi,\lambda\right)$ and
$\left(\Phi,\Lambda\right)$ uniquely determine each other. Let $\left(K,\psi,\Psi,\gamma,\Gamma\right)$
be another such 5-tuple. Then the following data are in bijection:
\begin{enumerate}
\item an oplax natural transformation $\vartheta\colon L\implies K$ of
oplax functors;
\item for every object $X\in\mathscr{A}$, a 1-cell $\vartheta_{X}\colon LX\to KX$
in $\mathscr{C}$, and for every 1-cell $f\colon X\to Y$ in $\mathscr{A}$,
a 2-cell
\[
\xymatrix{LX\myar{Lf}{r}\ar[d]_{\vartheta_{X}}\ar@{}[dr]|-{\Downarrow\vartheta_{f}} & LY\ar[d]^{\vartheta_{Y}}\\
KX\myard{Kf}{r} & KY
}
\]
natural in 1-cells $f\colon X\to Y$ and satisfying the following
conditions:
\begin{enumerate}
\item for every generic $\delta\colon c\to l;r$ in $\Delta_{2}$,
\[
\xymatrix{LX\myar{Lc}{rr}\ar[d]_{\vartheta_{X}}\ar@{}[drr]|-{\Downarrow\vartheta_{c}} &  & LZ\ar[d]^{\vartheta_{Z}} &  & LX\myar{Lc}{rr}\ar[d]_{\vartheta_{X}}\ar@/_{1pc}/[rd]^{Ll} & \ar@{}[d]|-{\Downarrow\Phi_{\delta}} & LZ\ar[d]^{\vartheta_{Z}}\\
KX\myar{Kc}{rr}\ar@/_{1pc}/[rd]_{Kl} & \ar@{}[d]|-{\Downarrow\Psi_{\delta}} & KZ & = & KX\ar@/_{1pc}/[rd]_{Kl}\ar@{}[dr]|-{\Downarrow\vartheta_{l}} & LY\ar@/_{1pc}/[ru]^{Lr}\ar[d]|-{\theta_{Y}} & KZ\ar@{}[dl]|-{\Downarrow\vartheta_{r}}\\
 & KY\ar@/_{1pc}/[ur]_{Kr} &  &  &  & KY\ar@/_{1pc}/[ur]_{Kr}
}
\]
\item for every augmentation $\epsilon\colon n\to1_{X}$ in $\Delta_{0}$,
\[
\xymatrix{LX\myar{Ln}{rr}\ar[d]_{\vartheta_{X}}\ar@{}[drr]|-{\Downarrow\vartheta_{n}} &  & LX\ar[d]^{\vartheta_{X}} &  & LX\myar{Ln}{rr}\ar[d]_{\vartheta_{X}}\ar@/_{2.5pc}/[rr]_{1_{LX}} & \ar@{}[d]|-{\Downarrow\Lambda_{\epsilon}} & LX\ar[d]^{\vartheta_{X}}\\
KX\myar{Kn}{rr}\ar@/_{2.5pc}/[rr]_{1_{KX}} & \ar@{}[d]|-{\Downarrow\Gamma_{\epsilon}} & KX & = & KX\ar@/_{2.5pc}/[rr]_{1_{KX}} & \;\ar@{}[d]|-{\Downarrow\textnormal{id}} & KX\\
 & \; &  &  &  & \;
}
\]
\end{enumerate}
\end{enumerate}
When $L$ and $K$ agree on objects, this restricts to the bijection
of the following data:
\begin{enumerate}
\item An icon between oplax functors 
\[
\vartheta\colon L\implies K\colon\mathscr{A}\to\mathscr{C}
\]
\item A collection of natural transformations 
\[
\vartheta_{X,Y}\colon L_{X,Y}\implies K_{X,Y}\colon\mathscr{A}_{X,Y}\to\mathscr{C}_{X,Y},\qquad X,Y\in\mathscr{A}
\]
rendering commutative the diagrams
\[
\xymatrix{L\left(c\right)\myar{\Phi_{\delta}}{rr}\ar[d]_{\vartheta_{c}} &  & L\left(l\right);L\left(r\right)\ar[d]^{\vartheta_{l};\vartheta_{r}} & L\left(n\right)\myar{\vartheta_{n}}{rr}\ar[rd]_{\Lambda_{n}} &  & K\left(n\right)\ar[ld]^{\Gamma_{n}}\\
K\left(c\right)\myard{\Psi_{\delta}}{rr} &  & K\left(l\right);K\left(r\right) &  & 1_{X}
}
\]
\end{enumerate}
\end{thm}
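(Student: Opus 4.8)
The plan is to prove the three bijections in turn, the first (for oplax functors) being the substantive one; the bijections for oplax natural transformations and for icons will then follow by the same technique. I begin with the easy direction $(1)\Rightarrow(2)$. Given an oplax functor $(L,\varphi,\lambda)$, one sets $\Phi_{\delta}:=\varphi_{l,r}\cdot L\delta$ for every generic $\delta\colon c\to l;r$ in $\Delta_{2}$, and $\Lambda_{\epsilon}:=\lambda_{X}\cdot L\epsilon$ for every augmentation $\epsilon\colon n\to1_{X}$ in $\Delta_{0}$. Axiom (a) is then immediate from naturality of $\varphi$, axiom (b) from functoriality of $L$ on $2$-cells, and each of (c), (d), (e) reduces to a short diagram chase: apply $L$ to the equation defining the generics in question, use naturality of $\varphi$ to slide the resulting $2$-cells $L(\delta_{i})$ past the oplax constraint cells, and invoke the corresponding oplax coherence law (associativity for (c), the two unit laws for (d) and (e)).

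The reverse direction $(2)\Rightarrow(1)$ is the core of the proof, and I would organise it in two stages. First, extend the data from $(\Delta_{2},\Delta_{0})$ to all generics and augmentations: by completeness of generics an arbitrary generic $\delta'$ has a representative $\delta\in\Delta_{2}$ fitting into an invertible comparison square, and I define $\Phi_{\delta'}$ by transporting $\Phi_{\delta}$ along it, the independence of the representative being precisely axiom (a) applied to the comparison of two representatives (which is invertible by the uniqueness up to isomorphism of generic factorisations, as in the proof of the Diers proposition or \cite[Lemma~5.7]{WeberGeneric}); a diagram chase reducing an arbitrary naturality square of generics to one between $\Delta_{2}$-representatives, using genericity to produce the comparison cells, shows this extended $\Phi$ still satisfies axiom (a). Since identity $1$-cells are sub-terminal, an augmentation is determined by its domain, so completeness of augmentations together with axiom (b) likewise yields a well-defined extension of $\Lambda$. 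Second, build the oplax structure: for composable $a,b$ pick any generic factorisation $1_{a;b}=(\eta_{1};\eta_{2})\cdot\delta$ of the identity $2$-cell and set $\varphi_{a,b}:=(L\eta_{1};L\eta_{2})\cdot\Phi_{\delta}$; and since $1_{1_{X}}$ is an augmentation, set $\lambda_{X}:=\Lambda_{1_{1_{X}}}$. Well-definedness of $\varphi_{a,b}$ follows from uniqueness of generic factorisations and axiom (a) for the extended $\Phi$; naturality of $\varphi$ in a pair of $2$-cells $(x,y)$ is obtained by factoring $\delta'\cdot(x;y)$ through $\delta$ by genericity, producing a commuting square of generics to which axiom (a) applies; the oplax functor associativity axiom is checked from axiom (c) after arranging, via Lemma \ref{cohassoc}, its left-handed analogue, and the associator clause of the coherent class, generic factorisations of the two iterated identities $1_{(a;b);c}$ and $1_{a;(b;c)}$ that are compatible across the associator; and the two unit axioms are checked via Lemma \ref{cohunit}, the unitor clauses, and axioms (d), (e), after first transporting along an isomorphism $c\cong c'$ into $\Omega_{2}$ (harmless, since factoring a unitor through a generic and applying completeness shows $\Omega_{2}$ meets every isomorphism class of $1$-cells).

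To see that the two assignments are mutually inverse: passing from an oplax functor to $(\Phi,\Lambda)$ and back returns $\varphi_{a,b}=(L\eta_{1};L\eta_{2})\cdot\varphi_{l,r}\cdot L\delta=\varphi_{a,b}\cdot L((\eta_{1};\eta_{2})\cdot\delta)=\varphi_{a,b}$ by naturality of $\varphi$, and similarly for $\lambda$. For the other composite one must check that $\varphi_{l,r}\cdot L\delta=\Phi_{\delta}$ for every generic $\delta\colon c\to l;r$ in $\Delta_{2}$, where $\varphi_{l,r}=(L\eta^{\circ}_{1};L\eta^{\circ}_{2})\cdot\Phi_{\delta^{\circ}}$ was built from a chosen generic factorisation $1_{l;r}=(\eta^{\circ}_{1};\eta^{\circ}_{2})\cdot\delta^{\circ}$; the composite $\delta^{\circ}\cdot\delta$ need not be generic, so I factor it through a fresh generic $\delta''$, use uniqueness of generic factorisations to identify $\delta''$ with $\delta$ up to a canonical invertible pair, and apply axiom (a) for the extended $\Phi$ twice — once to the square encoding that identification, once to the square whose commutativity records $\delta^{\circ}\cdot\delta$ as factoring through $\delta''$ — to collapse $\varphi_{l,r}\cdot L\delta$ to $\Phi_{\delta}$; the counit identity is analogous. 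The bijection for oplax natural transformations is proved by the same template: restriction to $\Delta_{2}$ and $\Delta_{0}$ is the easy direction, while conversely, given $(\vartheta_{X},\vartheta_{f})$ natural in $f$ and satisfying (a), (b), one first extends these conditions to all generics and augmentations by completeness and naturality of $\vartheta$, then recovers the compatibility of $\vartheta$ with an arbitrary composite $a;b$ by substituting the formulas just established for $\varphi_{a,b}$ and $\psi_{a,b}$ and using naturality of $\vartheta$ in the $2$-cells $\eta_{i}$, and recovers the unit compatibility from condition (b) at the augmentation $1_{1_{X}}$; the icon statement is then the special case $\vartheta_{X}=1_{LX}$, in which the composition and unit hexagons of an oplax natural transformation degenerate to the displayed squares and triangles.

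The part I expect to be the real obstacle — rather than the many routine chases against the oplax axioms — is the recurring fact that a composite of generics need not be generic. This is what forces the repeated re-factorisation through fresh generics that appears in the well-definedness and naturality of the extended $\Phi$, in the verification of the oplax associativity axiom, and most sharply in the identity $\varphi_{l,r}\cdot L\delta=\Phi_{\delta}$; in each case the remedy is the same — re-factorise, invoke uniqueness of generic factorisations to compare with the generic already at hand, and transport $\Phi$ across the resulting invertible comparison using axiom (a).
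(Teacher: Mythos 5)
Your proposal is correct and follows essentially the same route as the paper: define $\Phi_{\delta}=\varphi_{l,r}\cdot L\delta$ and $\Lambda_{\epsilon}=\lambda_{X}\cdot L\epsilon$ in one direction; in the other, extend $(\Phi,\Lambda)$ to all generics and augmentations via completeness, recover $\varphi_{a,b}$ from a generic factorisation of the identity $2$-cell on $a;b$ and $\lambda_{X}$ from $\Lambda$ at $\mathrm{id}_{1_{X}}$, and verify the oplax axioms via Lemmas \ref{cohunit} and \ref{cohassoc} and the clauses of the coherent class. The only divergence is cosmetic: for the identity $\varphi_{l,r}\cdot L\delta=\Phi_{\delta}$ you re-factor $\delta^{\circ}\cdot\delta$ through a fresh generic and compare, where the paper instead induces the comparison cells directly from the genericity of $\delta$ using the retraction $s_{1};s_{2}$ — both arguments go through.
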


\begin{proof}
We divide the proof into parts, verifying each bijection separately.

\noun{Bijection With Oplax Functors.} We first show how to pass between
the data of (1) and (2), and then verify this defines a bijection.

$\left(1\right)\Longrightarrow\left(2\right)\colon$ Suppose we are
given the data $\left(L,\varphi,\lambda\right)$ of (1). We define
$\Phi_{\delta}$ for each generic $\delta\colon c\to l;r$ by the
composite
\begin{equation}
\xymatrix{L\left(c\right)\myar{L\delta}{r} & L\left(l;r\right)\myar{\varphi_{l,r}}{r} & L\left(l\right);L\left(r\right)}
\label{defPhi}
\end{equation}
and define $\Lambda_{\epsilon}$ for each augmentation $\epsilon\colon n\to1_{X}$
by the composite
\begin{equation}
\xymatrix{L\left(n\right)\myar{L\epsilon}{r} & L\left(1_{X}\right)\myar{\lambda_{X}}{r} & 1_{LX}}
\label{defLambda}
\end{equation}
For naturality of comultiplication, we see that given a diagram as
on the left below
\[
\xymatrix{c\myar{\delta_{1}}{r}\ar[d]_{\zeta} & l;r\ar[d]^{\zeta_{1};\zeta_{2}} &  & Lc\myar{L\delta_{1}}{r}\ar[d]_{L\zeta} & L\left(l;r\right)\myar{\varphi_{l,r}}{r}\ar[d]|-{L\left(\zeta_{1};\zeta_{2}\right)} & Ll;Lr\ar[d]^{L\zeta_{1};L\zeta_{2}}\\
c'\myard{\delta_{2}}{r} & l';r' &  & Lc'\myard{L\delta_{2}}{r} & L\left(l';r'\right)\myard{\varphi_{l',r'}}{r} & Ll';Lr'
}
\]
the right commutes by naturality of $\varphi$ and local functoriality
of $L$. For naturality of counits note that given a commuting diagram
as on the left below
\[
\xymatrix@=1em{n\ar[dd]_{\xi}\ar[rd]^{\epsilon} &  &  &  &  & Ln\ar[dd]_{L\xi}\ar[rd]^{L\epsilon}\\
 & 1_{X} &  &  &  &  & L1_{X}\ar[r]^{\lambda_{X}} & 1_{LX}\\
n'\ar[ru]_{\epsilon'} &  &  &  &  & Ln'\ar[ru]_{L\epsilon'}
}
\]
the right trivially commutes. For associativity of comultiplication,
note that given a commuting diagram
\[
\xymatrix@=1em{c\ar[d]_{\delta_{3}}\ar@{=}[rr] &  & c\ar[d]^{\delta_{1}}\\
l;k\myard{l;\delta_{4}}{d} &  & h;r\ar[d]^{\delta_{2};r}\\
l;\left(m;r\right)\myard{\textnormal{assoc}}{rr} &  & \left(l;m\right);r
}
\]
we have the commutativity of the diagram
\[
\xymatrix@=1em{Lc\ar[d]_{L\delta_{3}}\ar@{=}[rrrr] &  &  &  & Lc\ar[d]^{L\delta_{1}}\\
L\left(l;k\right)\ar[d]_{\varphi_{l,k}}\ar[rd]^{L\left(l;\delta_{4}\right)} &  &  &  & L\left(h;r\right)\ar[d]^{\varphi_{h,r}}\ar[ld]_{L\left(\delta_{1};r\right)}\\
Ll;Lk\myard{Ll;L\delta_{4}}{d} & L\left(l;\left(m;r\right)\right)\ar[ld]^{\varphi_{l,\left(m;r\right)}}\myard{L\left(\textnormal{assoc}\right)}{rr} &  & L\left(\left(l;m\right);r\right)\ar[dr]_{\varphi_{\left(l;m\right),r}} & Lh;Lr\ar[d]^{L\delta_{2};Lr}\\
Ll;L\left(m;r\right)\ar[d]_{Ll;\varphi_{m,r}} &  &  &  & L\left(l;m\right);Lr\ar[d]^{\varphi_{l,m};Lr}\\
Ll;\left(Lm;Lr\right)\myard{\textnormal{assoc}}{rrrr} &  &  &  & \left(Ll;Lm\right);Lr
}
\]
by naturality of $\varphi$, associativity of $\varphi$ and local
functoriality of $L$. For the left counit axiom, suppose we are given
a commuting diagram as on the left below
\[
\xymatrix@=1.5em{ & l;c\ar[dr]^{\epsilon;c} &  &  & Lc\myar{\delta}{r}\ar@/_{1pc}/[rrd]_{L\left(\textnormal{unitor}\right)} & L\left(l;c\right)\myar{\varphi_{l,r}}{r}\ar[rd]_{L\left(\epsilon;c\right)} & Ll;Lc\myar{L\epsilon;Lc}{r} & L1_{X};Lc\myar{\lambda_{X};Lc}{r} & 1_{LX};Lc\\
c\myard{\textnormal{unitor}}{rr}\ar[ur]^{\delta} &  & 1_{X};c &  &  &  & L\left(1_{X};c\right)\ar[ur]_{\varphi_{_{1_{X}},c}}\myard{L\left(\textnormal{unitor}\right)}{r} & L\left(c\right)\ar@/_{0.7pc}/[ur]_{\textnormal{unitor}}
}
\]
and note the composite on the right above is the unitor by local functoriality
of $L$, naturality of $\varphi$, and the unitary axiom on $\lambda$.
The right counit axiom is similar.

$\left(2\right)\Longrightarrow\left(1\right)\colon$ Suppose we are
given the data $\left(L,\Phi,\Lambda\right)$ for a coherent class
$\left(\Delta_{2},\Delta_{0}\right)$. Now for any generic $\delta'\colon c'\to l';r'$
in $\mathscr{A}$ we have a commuting diagram as on the left below
with $\zeta_{1},\zeta_{2},\zeta$ invertible and $\delta\in\Delta_{2}$
\[
\xymatrix{c\myar{\delta}{r}\ar[d]_{\zeta} & l;r\ar[d]^{\zeta_{1};\zeta_{2}} &  & Lc\myar{\Phi_{\delta}}{r}\ar[d]_{L\zeta} & Ll;Lr\ar[d]^{L\zeta_{1};L\zeta_{2}}\\
c'\myard{\delta'}{r} & l';r' &  & Lc'\myard{\Phi_{\delta'}}{r} & Ll';Lr'
}
\]
and so we may define $\Phi_{\delta'}$ as the unique morphism making
the diagram on the right above commute; this being well defined as
a consequence of naturality of comultiplication.

Similarly, for any augmentation $\epsilon'\colon n'\to1_{X}$ in $\mathscr{A}$
there exists an augmentation $\epsilon\colon n\to1_{X}$ in $\Delta_{0}$
and isomorphism $\xi\colon n\to n'$ rendering commutative the left
diagram below
\[
\xymatrix@=1em{n\myar{\xi}{rr}\ar[rd]_{\epsilon} &  & n'\ar[ld]^{\epsilon'} &  &  & Ln\myar{L\xi}{rr}\ar[rd]_{\Lambda_{\epsilon}} &  & Ln'\ar[ld]^{\Lambda_{\epsilon'}}\\
 & 1_{X} &  &  &  &  & 1_{LX}
}
\]
and so we may define $\Lambda_{\epsilon'}$ as the unique morphism
making the right diagram above commute; similarly well defined by
naturality of counits.

We have now extended the definition of $\Phi$ and $\Lambda$ to all
generic morphisms and augmentations. Moreover, the naturality properties
now hold with respect to all generics $\delta$ and augmentations
$\epsilon$. Indeed, given any generics $\delta$ and $\delta'$ in
$\mathscr{A}$ and a diagram as on the left below (not assuming $\zeta,\zeta_{1}$
or $\zeta_{2}$ are invertible)
\[
\xymatrix@=1em{c\myar{\delta}{rr}\ar[ddd]_{\zeta} &  & l;r\ar[ddd]^{\zeta_{1};\zeta_{2}} &  &  &  &  & c\myar{\delta}{rr}\ar[d]_{\theta} &  & l;r\ar[d]^{\theta_{1};\theta_{2}}\\
 &  &  &  & \ar@{}[rd]|-{=} &  &  & \widetilde{c}\ar[d]_{\phi}\ar[rr]|-{\widetilde{\delta}} &  & \widetilde{l};\widetilde{r}\ar[d]^{\phi_{1};\phi_{2}}\\
 &  &  &  &  & \; &  & \widetilde{c'}\ar[d]_{\gamma}\ar[rr]|-{\widetilde{\delta'}} &  & \widetilde{l'};\widetilde{r'}\ar[d]^{\gamma_{1};\gamma_{2}}\\
c'\myard{\delta'}{rr} &  & l';r' &  &  &  &  & c'\myard{\delta'}{rr} &  & l';r'
}
\]
we can factor as on the right, where $\widetilde{\delta}$ and $\widetilde{\delta}'$
are in $\Delta_{2}$ and $\theta,\theta_{1},\theta_{2},\gamma,\gamma_{1}$
and $\gamma_{2}$ are invertible. Applying the naturality condition
to the three squares on the right then gives the naturality condition
for the left diagram. A similar calculation may be done concerning
augmentations.

To show that one may recover an oplax functor $L\colon\mathscr{A}\to\mathscr{C}$
we note we may define a general oplax constraint cell $\varphi_{a,b}\colon L\left(a;b\right)\to La;Lb$
by taking a diagram as on the left below with $\delta$ generic and
then defining the right diagram to commute.
\begin{equation}
\xymatrix@=1em{ & l;r\ar[rd]^{s_{1};s_{2}} &  &  &  &  & Ll;Lr\ar[rd]^{Ls_{1};Ls_{2}}\\
a;b\myard{\textnormal{id}}{rr}\ar[ur]^{\delta} &  & a;b &  &  & L\left(a;b\right)\myard{\varphi_{a,b}}{rr}\ar[ur]^{\Phi_{\delta}} &  & La;Lb
}
\label{defphi}
\end{equation}
Note that this is well defined since given two diagrams as on the
left above, we have a commuting diagram as on the left below
\begin{equation}
\xymatrix{a;b\ar[r]^{\delta}\ar[d]_{\delta'} & l;r\ar[d]^{s_{1};s_{2}} &  &  & La;Lb\ar[r]^{\Phi_{\delta}}\ar[d]_{\Phi_{\delta'}} & Ll;Lr\ar[d]^{Ls_{1};Ls_{2}}\\
l';r'\ar[r]_{t_{1};t_{2}}\ar@{..>}[ur]|-{\gamma_{1};\gamma_{2}} & a;b &  &  & Ll';Lr'\ar[r]_{Lt_{1};Lt_{2}}\ar[ur]|-{L\gamma_{1};L\gamma_{2}} & La;Lb
}
\label{deflambda}
\end{equation}
composing to the identity, and this implies the right diagram commutes
by naturality of comultiplication (with $\zeta$ taken to be the identity).
Trivially, we take each unit $\lambda_{X}\colon L\left(1_{X}\right)\to1_{X}$
to be the component of $\Lambda$ at $\textnormal{id}_{1_{X}}$.

To see that the family $\varphi$ satisfies naturality of the constraints
suppose that we are given a diagram as on the left below with the
horizontal paths composing to identities
\[
\xymatrix{a;b\myar{\delta}{r}\ar[d]_{\alpha;\beta} & l;r\myar{s_{1};s_{2}}{r}\ar@{..>}[d]^{\gamma_{1};\gamma_{2}} & a;b\ar[d]^{\alpha;\beta} &  & L\left(a;b\right)\myar{\Phi_{\delta}}{r}\ar[d]_{L\left(\alpha;\beta\right)} & Ll;Lr\myar{Ls_{1};Ls_{2}}{r}\ar[d]^{L\gamma_{1};L\gamma_{2}} & La;Lb\ar[d]^{L\alpha;L\beta}\\
a';b'\myard{\delta'}{r} & l';r'\myard{s_{1}';s_{2}'}{r} & a';b' &  & L\left(a';b'\right)\myard{\Phi_{\delta'}}{r} & Ll';Lr'\myard{Ls_{1}';Ls_{2}'}{r} & La';Lb'
}
\]
and note that the right diagram commutes by naturality of comultiplication.

Before checking associativity we first note that given any generics
$\delta_{1}',\delta_{2}',\delta_{3}'$ and $\delta_{4}'$ in $\mathscr{A}$
such that $(1)$ commutes below,
\[
\xymatrix@=1em{c\myar{\zeta^{-1}}{rr}\ar[d]_{\delta_{3}}\ar@{}[rrd]|-{(5)} &  & c'\ar[d]_{\delta_{3}'}\ar@{=}[rr]\ar@{}[rrdd]|-{(1)} &  & c'\ar[d]^{\delta_{1}'}\myar{\zeta}{rr}\ar@{}[rrd]|-{(2)} &  & c\ar[d]^{\delta_{1}}\\
l;k\ar[d]_{l;\delta_{4}}\ar[rr]|-{\alpha;\beta}\ar@{}[rrd]|-{(6)} &  & l';k'\myard{l';\delta_{4}'}{d} &  & h';r'\ar[d]^{\delta_{2}';r'}\ar[rr]|-{\zeta_{1};\zeta_{2}}\ar@{}[rrd]|-{(3)} &  & h;r\ar[d]^{\delta_{2};r}\\
l;\left(m;r\right)\myard{\phi_{1}^{-1};\left(\phi_{2}^{-1};\zeta_{2}^{-1}\right)}{rr} &  & l';\left(m';r'\right)\myard{\textnormal{assoc}}{rr}\ar@/_{1pc}/[rrd]|-{\phi_{1};\left(\phi_{2};\zeta_{2}\right)} &  & \left(l';m'\right);r'\myard{\left(\phi_{1};\phi_{2}\right);\zeta_{2}}{rr}\ar@{}[d]|-{(4)} &  & \left(l;m\right);r\\
 &  &  &  & l;\left(m;r\right)\ar@/_{1pc}/[rru]|-{\textnormal{assoc}}
}
\]
we can construct regions (2) and (3) as on the right above, where
$\delta_{1}$ and $\delta_{2}$ lie in $\Delta_{2}$. By naturality
of the associator (4) commutes. Then since our given class of generics
is coherent, we can find a $\delta_{3}$ and $\delta_{4}$ in $\Delta_{2}$
such that the outside diagram commutes above. By genericity of $\delta_{3}$
we then have induced 2-cells $\alpha$ and $\beta$ such that (5)
and (6) commute (invertible as $\delta_{3}'$ is also generic). Now,
by associativity of comultiplication the commutativity of the outside
diagram is respected by the transformation $\delta\mapsto\Phi_{\delta}$,
and this is equivalent to the commutativity of (1) being respected
as the pasting with (2),(3),(4),(5) and (6) may be undone.

Now, to see that the family $\varphi$ satisfies associativity of
the constraints consider the outside diagram of
\[
\xymatrix{\left(a;b\right);c\myar{\delta_{1}}{r}\ar[rd]_{\delta_{3}}\ar[dddd]_{\textnormal{assoc}} & h;r\myar{s_{1};s_{2}}{r}\ar[rd]_{\delta_{2};r} & \left(a;b\right);c\myar{\delta_{5};c}{r}\ar@{}[d]|-{(1)} & \left(f;g\right);c\myar{\left(t_{1};t_{2}\right);c}{r} & \left(a;b\right);c\ar[dddd]^{\textnormal{assoc}}\\
 & l;k\ar[rd]_{l;\delta_{4}}\ar@{..>}[ddd]_{\gamma_{1};\gamma_{2}}\ar@{}[r]|-{(3)} & \left(l;m\right);r\ar[d]|-{\textnormal{assoc}}\ar[ru]_{\left(\xi_{1};\xi_{2}\right);s_{2}}\\
 & \ar@{}[l]|-{(5)} & l;\left(m;r\right)\ar[rd]^{\xi_{1};\left(\xi_{2};s_{2}\right)}\ar@{..>}[d]_{\gamma_{1};\left(\alpha;\beta\right)} &  & \ar@{}[llu]|-{(4)}\\
 & \ar@{}[r]|-{(6)} & \widetilde{l};\left(\widetilde{m};\widetilde{r}\right)\ar[rd]^{p_{1};\left(\zeta_{1};\zeta_{2}\right)}\ar@{}[d]|-{(2)}\ar@{}[r]|-{(7)} & f;\left(g;c\right)\ar[rd]^{t_{1};\left(t_{2};c\right)}\\
a;\left(b;c\right)\myard{\delta_{6}}{r} & \widetilde{l};\widetilde{k}\myard{p_{1};p_{2}}{r}\ar[ru]^{\widetilde{l};\delta_{7}} & a;\left(b;c\right)\myard{a;\delta_{8}}{r} & a;\left(u;v\right)\myard{a;\left(q_{1};q_{2}\right)}{r} & a;\left(b;c\right)
}
\]
where the appropriate horizontal composites are identity 2-cells.
We first factor $\delta_{5}s_{1}$ through a generic $\delta_{2}$
to recover 2-cells $\xi_{1}$ and $\xi_{2}$ and the commuting region
(1). Similarly, we create the region (2). Now take $\delta_{3}$ and
$\delta_{4}$ to be generics such that region (3) commutes, which
exist by Lemma \ref{cohassoc}. We then note that region (4) commutes
by naturality of the associator in $\mathscr{A}$. Finally, note that
we have an induced $\left(\gamma_{1};\gamma_{2}\right)$ by genericity
of $\delta_{3}$, and thus $\delta_{7}\gamma_{2}$ yields an induced
$\left(\alpha;\beta\right)$ through the generic $\delta_{4}$.

We have now constructed the above diagram and shown each region commutes;
all that remains is to notice in the corresponding diagram below
\[
\xymatrix{L\left(\left(a;b\right);c\right)\myar{\Phi_{\delta_{1}}}{r}\ar[rd]_{\Phi_{\delta_{3}}}\ar[dddd]_{L\left(\textnormal{assoc}\right)} & Lh;Lr\myar{Ls_{1};Ls_{2}}{r}\ar[rd]_{\Phi_{\delta_{2}};Lr} & L\left(a;b\right);Lc\myar{\Phi_{\delta_{5}};c}{r}\ar@{}[d]|-{(1)} & \left(Lf;Lg\right);Lc\myar{\left(Lt_{1};Lt_{2}\right);Lc}{r} & \left(La;Lb\right);Lc\ar[dddd]^{\textnormal{assoc}}\\
 & Ll;Lk\ar[rd]_{Ll;\Phi_{\delta_{4}}}\ar[ddd]_{L\gamma_{1};L\gamma_{2}}\ar@{}[r]|-{(3)} & \left(Ll;Lm\right);Lr\ar[d]|-{\textnormal{assoc}}\ar[ru]_{\left(L\xi_{1};L\xi_{2}\right);Ls_{2}}\\
\; & \ar@{}[l]|-{(5)} & Ll;\left(Lm;Lr\right)\ar[rd]^{L\xi_{1};\left(L\xi_{2};Ls_{2}\right)}\ar[d]_{L\gamma_{1};\left(L\alpha;L\beta\right)} &  & \ar@{}[llu]|-{(4)}\\
 & \ar@{}[r]|-{(6)} & L\widetilde{l};L\left(\widetilde{m};\widetilde{r}\right)\ar[rd]^{Lp_{1};\left(L\zeta_{1};L\zeta_{2}\right)}\ar@{}[d]|-{(2)}\ar@{}[r]|-{(7)} & Lf;\left(Lg;Lc\right)\ar[rd]^{Lt_{1};\left(Lt_{2};Lc\right)}\\
L\left(a;\left(b;c\right)\right)\myard{\Phi_{\delta_{6}}}{r} & L\widetilde{l};L\widetilde{k}\myard{Lp_{1};Lp_{2}}{r}\ar[ru]^{L\widetilde{l};\Phi_{\delta_{7}}} & La;L\left(b;c\right)\myard{La;\Phi_{\delta_{8}}}{r} & La;\left(Lu;Lv\right)\myard{La;\left(Lq_{1};Lq_{2}\right)}{r} & La;\left(Lb;Lc\right)
}
\]
naturality of comultiplication implies (1), (2), (5) and (6) commute;
associativity of comultiplication implies (3) commutes; naturality
of the associators in $\mathscr{C}$ implies (4) commutes, and (7)
commutes as $L$ is locally a functor.

Before checking the unitary axioms on $\lambda$ we note that given
a generic $\delta'$ and augmentation $\epsilon'$ composable as in
the middle diagram below
\[
\xymatrix@=1em{ &  &  & n;c\ar@/^{1pc}/[rrrdd]^{\epsilon;c}\ar@{..>}[d]^{u_{1};u_{2}}\\
 &  &  & n';c'\ar[rd]^{\epsilon';c'}\\
c\myard{\zeta}{rr}\ar@/^{1pc}/[rrruu]^{\delta} &  & c'\myard{\textnormal{unitor}}{rr}\ar[ur]^{\delta'} &  & 1_{X};c'\myard{1_{X};\zeta^{-1}}{rr} &  & 1_{X};c
}
\]
we have an isomorphism $\zeta\colon c\to c'$ by axiom (1) of a coherent
class. By axiom (5) we then have a $\delta$ and $\epsilon$ in the
coherent class such that the outside diagram commutes. It follows
from genericity of $\delta$ that we have an induced isomorphism $u_{1};u_{2}$
such that the above diagram commutes. As the commutativity of the
outside diagram is respected by assumption, and the commutativity
of the left and right regions is respected by naturality of comultiplication
and augmentations respectively (and the pasting with these regions
can be undone), it follows that the commutativity of the middle diagram
is respected.

Now, to see the left unit axiom on $\lambda$ is satisfied note that
given any commuting diagram as on the left below
\[
\xymatrix{1_{X};c\myar{\textnormal{unitor}}{r}\ar[d]_{\delta} & c\myar{\textnormal{unitor}}{r}\ar[d]_{\delta'} & 1_{X};c & L\left(1_{X};c\right)\myar{L\left(\textnormal{unitor}\right)}{r}\ar[d]_{\Phi_{\delta}} & Lc\myar{\textnormal{unitor}}{r}\ar[d]_{\Phi_{\delta'}} & 1_{X};Lc\\
l;r\myard{s_{1};s_{2}}{r} & l';c\ar[ur]|-{\epsilon;c}\myard{\epsilon;c}{r} & 1_{X};c\ar[u]_{\textnormal{id}} & Ll;Lr\myard{Ls_{1};Ls_{2}}{r} & Ll';Lc\ar[ur]|-{\Lambda_{\epsilon};Lc}\myard{L\epsilon;Lc}{r} & L1_{X};Lc\ar[u]_{\Lambda_{1_{X}};Lc}
}
\]
we get a commuting diagram as on the right above by naturality of
comultiplication, the left counit axiom, and naturality of counits
(the bottom composite in this diagram is a $\varphi$ followed by
a $\lambda$). The right unitary axiom is similar.

Finally, note that the composite assignment 
\[
\left(1\right)\mapsto\left(2\right)\mapsto\left(1\right)
\]
is the identity, since with $\Phi$ defined as in \eqref{defPhi},
the oplax constraint cells as recovered by \eqref{defphi}, given
by the family of constraints
\[
\xymatrix{L\left(a;b\right)\myar{L\delta}{r} & L\left(l;r\right)\myar{\varphi_{l,r}}{r} & Ll;Lr\myar{Ls_{1};Ls_{2}}{r} & La;Lb}
\]
are clearly equal to $\varphi_{a,b}$ by naturality. Moreover, the
composite assignment
\[
\left(2\right)\mapsto\left(1\right)\mapsto\left(2\right)
\]
is the identity, since with $\varphi$ defined as by \eqref{defphi},
the comultiplication cells $\Phi$ at an arbitrary generic $\widetilde{\delta}\in\Delta_{2}$
are given by the composite in the top line on the left below
\begin{equation}
\xymatrix@=1em{ &  &  &  & \;\ar@{}[d]|-{:=}\\
Lc\myar{L\widetilde{\delta}}{rr}\ar[rrd]_{\Phi_{\widetilde{\delta}}} &  & L\left(\widetilde{l};\widetilde{r}\right)\myar{\Phi_{\delta}}{rr}\ar@{}[d]|-{\left(1\right)}\ar@/^{2pc}/[rrrr]^{\varphi_{\tilde{l},\tilde{r}}} &  & Ll;Lr\myar{Ls_{1};Ls_{2}}{rr} &  & L\widetilde{l};L\widetilde{r} &  & l;r\ar[rd]^{s_{1};s_{2}}\\
 &  & L\widetilde{l};L\widetilde{r}\ar[rru]_{L\widetilde{\delta_{1}};L\widetilde{\delta_{2}}} &  &  &  &  & \widetilde{l};\widetilde{r}\myard{\textnormal{id}}{rr}\myar{\delta}{ru} &  & \widetilde{l};\widetilde{r}
}
\label{212}
\end{equation}
where $\delta\in\Delta_{2}$ is a generic and the right diagram commutes.
Then we note that
\[
\xymatrix{c\myar{\widetilde{\delta}}{r}\ar[d]_{\widetilde{\delta}} & \widetilde{l};\widetilde{r}\ar@{..>}[d]^{\widetilde{\delta_{1}};\widetilde{\delta_{2}}} &  & c\myar{\widetilde{\delta}}{r}\ar[d]_{\widetilde{\delta}} & \widetilde{l};\widetilde{r}\ar[d]^{\textnormal{id};\textnormal{id}}\ar@{..>}[dl]|-{\textnormal{id};\textnormal{id}} &  & c\myar{\widetilde{\delta}}{r}\ar[d]_{\widetilde{\delta}} & \widetilde{l};\widetilde{r}\ar@{..>}[ld]|-{\textnormal{id};\textnormal{id}}\ar[d]^{s_{1}\widetilde{\delta_{1}};s_{2}\widetilde{\delta_{2}}}\\
\widetilde{l};\widetilde{r}\myard{\delta}{r} & l;r &  & \widetilde{l};\widetilde{r}\myard{\textnormal{id};\textnormal{id}}{r} & \widetilde{l};\widetilde{r} &  & \widetilde{l};\widetilde{r}\myard{\textnormal{id};\textnormal{id}}{r} & \widetilde{l};\widetilde{r}
}
\]
 we have an induced $\widetilde{\delta_{1}};\widetilde{\delta_{2}}$
rendering commutative the left diagram above by genericity of $\widetilde{\delta}$,
the middle diagram shows that the induced diagonal is necessarily
a pair of identities (by component-wise commutativity of the bottom
triangle), and whiskering the left diagram with $s_{1};s_{2}$ gives
the right diagram, where as we have noted the induced diagonal making
the diagram commute is a pair of identities. Consequently, $s_{1}\widetilde{\delta_{1}}$
and $s_{2}\widetilde{\delta_{2}}$ are identities. We then note that
in diagram \ref{212} the region (1) commutes by naturality of comultiplication,
and applying local functoriality of $L$ we then see the given composite
is $\Phi_{\tilde{\delta}}$ as required. 

The bijection of the nullary data may be similarly proven using the
respective naturality properties, and so we omit the details.

\noun{Bijection With Oplax Natural Transformations.} As the the data
of (1) and (2) is the same, we need only check that the coherence
conditions correspond.

$\left(1\right)\Longrightarrow\left(2\right)\colon$ Suppose we are
given an oplax natural transformation $\vartheta\colon L\to K$ in
the usual sense. Then by the definition of $\Phi$ at a $\delta\in\Delta_{2}$
we have
\[
\xymatrix{ &  &  &  &  & \;\ar@{}[d]|-{\Downarrow L\delta}\\
LX\myar{Lc}{rr}\ar[d]_{\vartheta_{X}}\ar@/_{1pc}/[rd]^{Ll} & \ar@{}[d]|-{\Downarrow\Phi_{\delta}} & LZ\ar[d]^{\vartheta_{Z}} &  & LX\ar[d]_{\vartheta_{X}}\ar@/_{1pc}/[rd]^{Ll}\ar[rr]|-{L\left(l;r\right)}\ar@/^{2pc}/[rr]^{L\left(c\right)} & \ar@{}[d]|-{\Downarrow\varphi_{l,r}} & LZ\ar[d]^{\vartheta_{Z}}\\
KX\ar@/_{1pc}/[rd]_{Kl}\ar@{}[dr]|-{\Downarrow\vartheta_{l}} & LY\ar@/_{1pc}/[ru]^{Lr}\ar[d]|-{\theta_{Y}} & KZ\ar@{}[dl]|-{\Downarrow\vartheta_{r}} & = & KX\ar@/_{1pc}/[rd]_{Kl}\ar@{}[dr]|-{\Downarrow\vartheta_{l}} & LY\ar@/_{1pc}/[ru]^{Lr}\ar[d]|-{\theta_{Y}} & KZ\ar@{}[dl]|-{\Downarrow\vartheta_{r}}\\
 & KY\ar@/_{1pc}/[ur]_{Kr} &  &  &  & KY\ar@/_{1pc}/[ur]_{Kr}
}
\]
which by compatibility with composition is
\[
\xymatrix{ & \;\ar@{}[d]|-{\Downarrow L\delta} &  &  &  & \;\ar@{}[d]|-{\overset{\;}{\Downarrow\vartheta_{c}}}\\
LX\ar[d]_{\vartheta_{X}}\ar[rr]|-{L\left(l;r\right)}\ar@/^{2pc}/[rr]^{L\left(c\right)}\ar@{}[drr]|-{\Downarrow\vartheta_{l;r}} &  & LZ\ar[d]^{\vartheta_{Z}} &  & LX\ar[d]_{\vartheta_{X}}\ar@/^{2pc}/[rr]^{L\left(l;r\right)} & \ar@{}[d]|-{\overset{\;}{\Downarrow K\delta}} & LZ\ar[d]^{\vartheta_{Z}}\\
KX\ar@/_{1pc}/[rd]_{Kl}\ar[rr]|-{K\left(l;r\right)} & \ar@{}[d]|-{\Downarrow\psi_{l,r}} & KZ & = & KX\ar@/_{1pc}/[rd]_{Kl}\ar[rr]|-{K\left(l;r\right)}\ar@/^{2pc}/[rr]^{K\left(c\right)} & \ar@{}[d]|-{\Downarrow\psi_{l,r}} & KZ\\
 & KY\ar@/_{1pc}/[ur]_{Kr} &  &  &  & KY\ar@/_{1pc}/[ur]_{Kr}
}
\]
and by definition of $\Psi$ this gives the required coherence condition.
We omit the nullary version.

$\left(2\right)\Longrightarrow\left(1\right)\colon$ Suppose we are
given the data of (2) subject to the coherence conditions of (2).
Then by the definition of the constraint data $\varphi$ we have 
\[
\xymatrix{ & \;\ar@{}[d]|-{\Downarrow\varphi_{f,g}} &  &  &  & \;\ar@{}[d]|-{\Downarrow\Phi_{\delta}}\\
LX\myar{Lf}{r}\ar@/^{2.5pc}/[rr]^{L\left(f;g\right)}\ar[d]_{\vartheta_{X}}\ar@{}[rd]|-{\Downarrow\vartheta_{f}} & LY\myar{Lg}{r}\ar[d]|-{\vartheta_{Y}}\ar@{}[rd]|-{\Downarrow\vartheta_{g}} & LZ\ar[d]^{\vartheta_{Z}} & = & LX\ar@/_{0.4pc}/[r]_{Lf}\ar@/^{2.5pc}/[rr]^{L\left(f;g\right)}\ar[d]_{\vartheta_{X}}\ar@{}[rd]|-{\stackrel{\;}{\Downarrow\vartheta_{f}}}\ar@/^{0.4pc}/[r]^{Ll}\ar@{}[r]|-{\Downarrow Ls_{1}} & LY\ar[d]|-{\vartheta_{Y}}\ar@{}[rd]|-{\stackrel{\;}{\Downarrow\vartheta_{g}}}\ar@/^{0.4pc}/[r]^{Lr}\ar@/_{0.4pc}/[r]_{Lg}\ar@{}[r]|-{\Downarrow Ls_{2}} & LZ\ar[d]^{\vartheta_{Z}}\\
KX\myard{Kf}{r} & KY\myard{Kg}{r} & KZ &  & KX\myard{Kf}{r} & KY\myard{Kg}{r} & KZ
}
\]
and so applying naturality of $\vartheta$, this is equal to the left
below
\[
\xymatrix{ & \;\ar@{}[d]|-{\Downarrow\Phi_{\delta}} &  &  &  & \ar@{}[d]|-{\Downarrow\vartheta_{f;g}}\\
LX\myar{Ll}{r}\ar[d]_{\vartheta_{X}}\ar@{}[rd]|-{\Downarrow\vartheta_{l}}\ar@/^{2.5pc}/[rr]^{L\left(f;g\right)} & LY\myar{Lr}{r}\ar[d]|-{\vartheta_{Y}}\ar@{}[rd]|-{\Downarrow\vartheta_{r}} & LZ\ar[d]^{\vartheta_{Z}} & = & LX\ar@/^{2.5pc}/[rr]^{L\left(f;g\right)}\ar[d]_{\vartheta_{X}} & \;\ar@{}[d]|-{\Downarrow\Psi_{\delta}} & LZ\ar[d]^{\vartheta_{Z}}\\
KX\ar@/_{0.4pc}/[r]_{Kf}\ar@/^{0.4pc}/[r]^{Kl}\ar@{}[r]|-{\Downarrow Ks_{1}} & KY\ar@/^{0.4pc}/[r]^{Kr}\ar@/_{0.4pc}/[r]_{Kg}\ar@{}[r]|-{\Downarrow Ks_{2}} & KZ &  & KX\ar@/_{0.4pc}/[r]_{Kf}\ar@/^{0.4pc}/[r]^{Kl}\ar@{}[r]|-{\Downarrow Ks_{1}}\ar@/^{2.5pc}/[rr]^{K\left(f;g\right)} & KY\ar@/^{0.4pc}/[r]^{Kr}\ar@/_{0.4pc}/[r]_{Kg}\ar@{}[r]|-{\Downarrow Ks_{2}} & KZ
}
\]
which by the assumed coherence axiom is the right above. Applying
the definition of $\psi$, we recover the compatibility of an oplax
natural transformation with composition. Again, we will omit the analogous
nullary condition.

\noun{Bijection With Icons. }This trivially follows taking each $\vartheta_{X}$
to be an identity 1-cell in the above bijection.
\end{proof}
\begin{rem}
Notice that in Theorem \ref{generalcoherence}, giving binary oplax
constraint cells
\[
\varphi_{l,r}\colon L\left(l;r\right)\to Ll;Lr
\]
for generics $\delta\colon c\to l;r$ in $\Delta_{2}$ completely
determines arbitrary oplax constraint cells
\[
\varphi_{a,b}\colon L\left(a;b\right)\to La;Lb
\]
This is since these $\varphi_{l,r}$ suffice to construct each $\Phi_{\delta}$.
Hence this theorem provides a reduction in the data of an oplax functor
when the domain bicategory $\mathscr{A}$ is generic. 
\end{rem}

\begin{rem}
Given a family of hom-categories $\mathscr{A}_{X,Y}$, sets $\mathfrak{M}_{c}^{X,Y,Z}$,
and natural isomorphisms 
\[
\mathscr{A}_{X,Z}\left(c,a;b\right)\cong\sum_{m\in\mathfrak{M}_{c}^{X,Y,Z}}\mathscr{A}_{X,Y}\left(l_{m},a\right)\times\mathscr{A}_{Y,Z}\left(r_{m},b\right)
\]
for all $X,Y,Z$ and $c$, the formal composite $a;b$ is essentially
uniquely determined (by essential uniqueness of representing objects).

Given a complete class of generics $\Delta_{2}$ equipped with their
universal properties, one may recover the above by taking $\mathfrak{M}_{c}^{X,Y,Z}$
to be the set of equivalence classes of generics $\delta\colon c\to l;r$.
It follows that composition in the bicategory is essentially uniquely
determined by the generics.
\end{rem}

\section{Consequences and examples\label{consequences}}

In this section we discuss some of the main examples of Theorem \ref{generalcoherence}.
Viewing monoidal categories as one-object bicategories, we first consider
the case where $\mathscr{A}$ is a cartesian monoidal category, giving
a simple and informative example of this situation. We then go on
to consider more complicated examples, namely where $\mathscr{A}$
is the bicategory of spans or the bicategory of polynomials with cartesian
2-cells.

For completeness, we also discuss the case where $\mathscr{A}$ is
the category of finite sets and bijections with the disjoint union
monoidal structure, but will omit some details as this is a rather
trivial example.

\subsection{Cartesian monoidal categories}

Given a category $\mathcal{E}$ with finite products, one may construct
the cartesian monoidal category $\left(\mathcal{E},\times,\mathbf{1}\right)$
where the tensor product is the cartesian product and the unit is
the terminal object. Clearly this monoidal category is generic, as
\[
\mathcal{E}\left(T,-\times-\right)\colon\mathcal{E}\times\mathcal{E}\to\mathbf{Set}
\]
is representable (no coproducts are necessary). Now, seen as a one
object bicategory, the generics are the diagonal morphisms $\delta_{T}$
in $\mathcal{E}$ of the form
\[
\xymatrix@=1em{ &  & T\ar@{..>}[dd]|-{\delta_{T}}\ar[rrdd]^{\textnormal{id}}\ar[lldd]_{\textnormal{id}}\\
\\
T &  & T\times T\myard{\pi_{2}}{rr}\myar{\pi_{1}}{ll} &  & T
}
\]
and so we take $\Delta_{2}$ to be the class of diagonals $\delta_{T}\colon T\to T\times T$
for each $T\in\mathcal{E}$. Trivially, we take the augmentations
as the unique maps into the terminal object from each object $T\in\mathcal{E}$.
Applying Theorem \ref{generalcoherence} in this case then makes it
clear why we may say the data of this theorem is analogous to the
data of a comonad; indeed, we have the following.
\begin{cor}
Let $\mathcal{E}$ be a category with finite products and let $\left(\mathcal{C},\otimes,I\right)$
be a monoidal category. Denote by $\left(\mathcal{E},\times,\mathbf{1}\right)$
the category $\mathcal{E}$ equipped with the cartesian monoidal structure.
Then to give an oplax monoidal functor 
\[
L\colon\left(\mathcal{E},\times,\mathbf{1}\right)\to\left(\mathcal{C},\otimes,I\right)
\]
 is to give a functor $L\colon\mathcal{E}\to\mathcal{C}$ with comultiplication
and counit maps 
\[
\Phi_{T}\colon L\left(T\right)\to L\left(T\right)\otimes L\left(T\right),\qquad\Lambda_{T}\colon L\left(T\right)\to I
\]
for every $T\in\mathcal{E}$, such that for every $T\in\mathcal{E}$
the diagrams
\[
\xymatrix@=1em{ & LT\otimes LT\ar[dr]^{LT;\Lambda_{T}} &  &  &  & LT\otimes LT\ar[dr]^{\Lambda_{T};LT}\\
LT\myard{\textnormal{unitor}}{rr}\ar[ur]^{\Phi_{T}} &  & LT\otimes I &  & LT\myard{\textnormal{unitor}}{rr}\ar[ur]^{\Phi_{T}} &  & I\otimes LT
}
\]
commute, the diagrams
\[
\xymatrix@=1em{LT\ar[d]_{\Phi_{T}}\ar@{=}[rr] &  & LT\ar[d]^{\Phi_{T}}\\
LT\otimes LT\myard{LT\otimes\Phi_{T}}{d} &  & LT\otimes LT\ar[d]^{\Phi_{T}\otimes LT}\\
LT\otimes\left(LT\otimes LT\right)\myard{\textnormal{assoc}}{rr} &  & \left(LT\otimes LT\right)\otimes LT
}
\]
commute, and all morphisms $f\colon T\to T'$ in $\mathcal{E}$ render
commutative
\[
\xymatrix@=1.5em{L\left(T\right)\myar{Lf}{rr}\ar[rd]_{\Lambda_{T}} &  & L\left(T'\right)\ar[ld]^{\Lambda_{T'}} &  & LT\myar{\Phi_{T}}{rr}\ar[d]_{Lf} &  & LT\otimes LT\ar[d]^{Lf\otimes Lf}\\
 & 1_{X} &  &  & LT'\myard{\Phi_{T'}}{rr} &  & LT'\otimes LT'
}
\]
\end{cor}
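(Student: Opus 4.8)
The plan is to read off this corollary from Theorem~\ref{generalcoherence} applied to the one-object bicategories $\mathscr{A}$ and $\mathscr{C}$ associated to the monoidal categories $(\mathcal{E},\times,\mathbf{1})$ and $(\mathcal{C},\otimes,I)$, using that an oplax functor $\mathscr{A}\to\mathscr{C}$ between such bicategories is precisely an oplax monoidal functor. Thus the work splits into three tasks: showing $\mathscr{A}$ is generic and identifying its generics, exhibiting a convenient coherent class $(\Delta_2,\Delta_0)$, and unwinding the axioms of Theorem~\ref{generalcoherence} for that class.

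First I would note that $\mathcal{E}(T,-\times-)\colon\mathcal{E}\times\mathcal{E}\to\mathbf{Set}$ is representable, with representing pair $(T,T)$ and universal element the diagonal $\delta_T=\langle 1_T,1_T\rangle$; a representable functor is a one-term coproduct of representables, so by the corollary following the Diers proposition $\mathscr{A}$ is generic. As in the proof of Lemma~\ref{cohassoc} (post-composing a generic with an isomorphism is again generic, and induced comparisons of generics are invertible), the generic $2$-cells out of $T$ are exactly the morphisms $\langle\alpha,\beta\rangle=(\alpha\times\beta)\circ\delta_T$ with $\alpha,\beta$ invertible. I then take $\Delta_2=\{\delta_T\mid T\in\mathcal{E}\}$ and $\Delta_0=\{\,!_T\colon T\to\mathbf{1}\mid T\in\mathcal{E}\}$ and check coherence. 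Completeness of generics is the description just given; completeness of augmentations is immediate since $\mathbf{1}$ is terminal, so every augmentation $n'\to\mathbf{1}$ already lies in $\Delta_0$ with comparison the identity. For the remaining conditions one uses that the constraints of a cartesian structure are the canonical maps into products: the left unitor at $c$ is $\langle\,!_c,1_c\rangle=(\,!_c\times c)\circ\delta_c$, which witnesses left unitor coherence with $\delta=\delta_c$, $\epsilon=\,!_c$; right unitor coherence is symmetric; and associator coherence holds with all four generics equal to the relevant $\delta_c$, since both composites around the pentagon send $1_c$ to the triple diagonal. (One could instead invoke the remark that the class of \emph{all} generics is always coherent; the point of the smaller class is to obtain one $\Phi_T$ per object.)

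With this coherent class, Theorem~\ref{generalcoherence} asserts that an oplax monoidal structure on $L\colon\mathcal{E}\to\mathcal{C}$ amounts to a $2$-cell $\Phi_{\delta_T}=:\Phi_T\colon LT\to LT\otimes LT$ and a $2$-cell $\Lambda_{!_T}=:\Lambda_T\colon LT\to I$ for each $T$, subject to its five axioms. Axiom (c) at $\delta_1=\delta_2=\delta_3=\delta_4=\delta_T$ is the stated coassociativity pentagon; axioms (d) and (e) at $\delta=\delta_T$, $\epsilon=\,!_T$ are the two stated unitor triangles; axiom (b) at $f\colon T\to T'$ reads $\Lambda_{T'}\circ Lf=\Lambda_T$, its hypothesis $!_{T'}\circ f=!_T$ being automatic, and this is the stated triangle for $\Lambda$. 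The one axiom needing a moment's attention is (a): for $f\colon T\to T'$ the required commuting square is $\delta_{T'}\circ f=(\zeta_1\times\zeta_2)\circ\delta_T$, whose two sides are $\langle f,f\rangle$ and $\langle\zeta_1,\zeta_2\rangle$, forcing $\zeta_1=\zeta_2=f$, so the axiom collapses to ordinary naturality $\Phi_{T'}\circ Lf=(Lf\otimes Lf)\circ\Phi_T$. Assembling these identifications gives the claimed bijection. The only mild obstacle in the whole argument is precisely this last reduction, together with verifying that the small class $\Delta_2=\{\delta_T\}$ — rather than all generics — is still coherent; everything else is routine manipulation of the universal property of the product.
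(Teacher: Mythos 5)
Your proposal is correct and follows the same route as the paper: the paper likewise observes that $\mathcal{E}\left(T,-\times-\right)$ is representable with universal element the diagonal $\delta_{T}$, takes $\Delta_{2}$ to be the diagonals and $\Delta_{0}$ the unique maps to the terminal object, and reads the corollary off Theorem \ref{generalcoherence}. You merely make explicit the coherence checks for $\left(\Delta_{2},\Delta_{0}\right)$ and the collapse of the naturality axiom to $\Phi_{T'}\circ Lf=\left(Lf\otimes Lf\right)\circ\Phi_{T}$, which the paper leaves implicit.
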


The unitary and associativity conditions above ask that $L$ sends
each $T\in\mathcal{E}$ to a comonoid $\left(LT,\Phi_{T},\Lambda_{T}\right)$
in $\left(\mathcal{C},\otimes,I\right)$, and the last two conditions
ask that morphisms in $\mathcal{E}$ are sent to morphisms of comonoids.
Hence this may be simply stated as follows.
\begin{cor}
Let $\mathbf{Comon}\left(\mathcal{C},\otimes,I\right)$ be the category
of comonoids in the monoidal category $\left(\mathcal{C},\otimes,I\right)$.
Then oplax monoidal functors $\left(\mathcal{E},\times,\mathbf{1}\right)\to\left(\mathcal{C},\otimes,I\right)$
are in bijection with functors $\mathcal{E}\to\mathbf{Comon}\left(\mathcal{C},\otimes,I\right)$.
\end{cor}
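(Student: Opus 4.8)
The plan is to recognise this final statement as a direct repackaging of the preceding corollary, the only extra ingredient being the faithfulness of the forgetful functor $U\colon\mathbf{Comon}\left(\mathcal{C},\otimes,I\right)\to\mathcal{C}$ which sends a comonoid to its underlying object.

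First I would unwind what a functor $F\colon\mathcal{E}\to\mathbf{Comon}\left(\mathcal{C},\otimes,I\right)$ amounts to: it assigns to each $T\in\mathcal{E}$ a comonoid $\left(C_{T},\delta_{T},\varepsilon_{T}\right)$ in $\left(\mathcal{C},\otimes,I\right)$ and to each $f\colon T\to T'$ a morphism of comonoids $\left(C_{T},\delta_{T},\varepsilon_{T}\right)\to\left(C_{T'},\delta_{T'},\varepsilon_{T'}\right)$, functorially. Post-composing with the forgetful functor $U$ yields an ordinary functor $L:=UF\colon\mathcal{E}\to\mathcal{C}$ with $LT=C_{T}$; since $U$ is faithful and an object of $\mathbf{Comon}\left(\mathcal{C},\otimes,I\right)$ over a given $C\in\mathcal{C}$ is by definition precisely a comonoid structure on $C$, the functor $F$ is uniquely recovered from $L$ together with the comonoid structures $\left(\Phi_{T},\Lambda_{T}\right):=\left(\delta_{T},\varepsilon_{T}\right)$ it places on each object $LT$, subject to the sole requirement that every $Lf$ be a morphism of comonoids — preservation of composition and identities by $F$ then being automatic from that of $L$.

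Next I would match this data term by term with the data classified by the preceding corollary. The coassociativity and the left and right counit laws for the comonoid $\left(LT,\Phi_{T},\Lambda_{T}\right)$ are exactly the associativity square and the two unitor triangles displayed there; the statement that each $Lf\colon LT\to LT'$ is a morphism of comonoids is exactly the two naturality conditions $\Lambda_{T'}\cdot Lf=\Lambda_{T}$ and $\left(Lf\otimes Lf\right)\cdot\Phi_{T}=\Phi_{T'}\cdot Lf$ displayed there. Hence functors $\mathcal{E}\to\mathbf{Comon}\left(\mathcal{C},\otimes,I\right)$ are in bijection with the data of the preceding corollary, which is in turn in bijection with oplax monoidal functors $\left(\mathcal{E},\times,\mathbf{1}\right)\to\left(\mathcal{C},\otimes,I\right)$; composing the two bijections gives the claim.

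There is no genuine obstacle here: the mathematical content has already been extracted in the preceding corollary, ultimately from Theorem \ref{generalcoherence}, and the present statement is merely a cleaner phrasing of it. The single point deserving a line of care is that a functor valued in $\mathbf{Comon}\left(\mathcal{C},\otimes,I\right)$ carries no data beyond an underlying functor to $\mathcal{C}$ together with object-wise comonoid structures, and no conditions beyond the comonoid axioms and the comonoid-morphism axioms — which is exactly what faithfulness of $U$ guarantees.
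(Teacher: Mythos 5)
Your proposal is correct and matches the paper's own (very brief) argument: the paper simply observes that the unitary and associativity conditions of the preceding corollary say each $LT$ is a comonoid $\left(LT,\Phi_{T},\Lambda_{T}\right)$ and the remaining two conditions say each $Lf$ is a comonoid morphism, which is exactly your term-by-term matching. Your extra remark about faithfulness of the forgetful functor is a harmless elaboration of the same point.
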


\subsection{Bicategories of spans}

Given a category $\mathcal{E}$ with pullbacks, one may form the bicategory
of spans in $\mathcal{E}$ denoted $\mathbf{Span}\left(\mathcal{E}\right)$
with objects those of $\mathcal{E}$, 1-cells given by spans
\[
\xymatrix@=1em{ & T\ar[rd]^{t}\ar[ld]_{s}\\
X &  & Z
}
\]
denoted $\left(s,t\right)$, 2-cells given by morphisms $f$ rendering
commutative diagrams as on the left below
\[
\xymatrix@=1em{ & K\ar[rd]^{b}\ar[ld]_{a}\ar[dd]|-{f} &  &  &  &  &  &  & M\ar[rd]^{\pi_{2}}\ar[ld]_{\pi_{1}}\ar@{}[dd]|-{\underset{\;}{\textnormal{pb}}}\\
X &  & Y &  &  &  &  & R\ar[rd]^{v}\ar[ld]_{u} &  & S\ar[rd]^{q}\ar[ld]_{p}\\
 & R\ar[ur]_{v}\ar[lu]^{u} &  &  &  &  & X &  & Y &  & Z
}
\]
and composition of 1-cells given by forming the pullback as on the
right above \cite{ben1967}. 

The reader will then notice that by the universal property of pullback,
giving a morphism of spans $\left(s,t\right)\to\left(u,v\right);\left(p,q\right)$
as on the left below
\[
\xymatrix@=1em{ &  & T\ar@{..>}[d]\ar@/^{1pc}/[rrddd]^{t}\ar@/_{1pc}/[llddd]_{s} &  &  &  &  &  &  & T\ar[ddd]|-{h}\ar@/^{1pc}/[rrddd]^{t}\ar@/_{1pc}/[llddd]_{s}\ar@{..>}[ddl]\ar@{..>}[ddr]\\
 &  & M\ar[rd]^{\pi_{2}}\ar[ld]_{\pi_{1}}\ar@{}[dd]|-{\underset{\;}{\textnormal{pb}}} &  &  & \ar@{}[rd]|-{\sim}\\
 & R\ar[rd]^{v}\ar[ld]_{u} &  & S\ar[rd]^{q}\ar[ld]_{p} &  &  & \; &  & R\ar[rd]^{v}\ar[ld]_{u} &  & S\ar[rd]^{q}\ar[ld]_{p}\\
X &  & Y &  & Z &  &  & X &  & Y &  & Z
}
\]
is to give a morphism $h\colon T\to Y$ as well as pair of morphisms
of spans as on the right above such that each region in the diagram
commutes. Therefore
\[
\mathbf{Span}\left(\mathcal{E}\right)_{X,Z}\left(\left(s,t\right),\left(u,v\right);\left(p;q\right)\right)
\]
is isomorphic to
\begin{equation}
\sum_{h\colon H\to Y}\mathbf{Span}\left(\mathcal{E}\right)_{X,Y}\left(\left(s,h\right),\left(u,v\right)\right)\times\mathbf{Span}\left(\mathcal{E}\right)_{Y,Z}\left(\left(h,t\right),\left(p,q\right)\right)\label{spanfamrep}
\end{equation}
and so the bicategory of spans is generic. Our class of generics $\Delta_{2}$
consists of, for each diagram
\[
\xymatrix@=0.7em{ &  & T\ar[rrdd]^{t}\ar[lldd]_{s}\ar[dd]|-{h}\\
\\
X &  & Y &  & Z
}
\]
in $\mathcal{E}$, the morphisms of spans $\delta_{s,h,t}\colon\left(s,t\right)\to\left(s,h\right);\left(h,t\right)$
corresponding to
\[
\xymatrix@=1em{ &  & T\ar[ddd]|-{h}\ar@/^{1pc}/[rrddd]^{t}\ar@/_{1pc}/[llddd]_{s}\ar@{..>}|-{\textnormal{id}}[ddl]\ar@{..>}[ddr]|-{\textnormal{id}}\\
\\
 & T\ar[rd]^{h}\ar[ld]_{s} &  & T\ar[rd]^{t}\ar[ld]_{h}\\
X &  & Y &  & Z
}
\]
under this bijection. Our augmentations are the morphisms of spans
as below for each morphism $h$ in $\mathcal{E}$
\[
\xymatrix@=1em{ & T\ar[rd]^{h}\ar[ld]_{h}\ar[dd]|-{h}\\
X &  & X\\
 & X\ar[ur]_{\textnormal{id}}\ar[lu]^{\textnormal{id}}
}
\]
and will be denoted by $\epsilon_{h}$. Thus, applying Theorem \ref{generalcoherence}
we have the following.
\begin{cor}
Let $\mathcal{E}$ be a category with pullbacks and denote by $\mathbf{Span}\left(\mathcal{E}\right)$
the bicategory of spans in $\mathcal{E}$. Let $\mathscr{C}$ be a
bicategory. Then to give an oplax functor
\[
L\colon\mathbf{Span}\left(\mathcal{E}\right)\to\mathscr{C}
\]
 is to give a locally defined functor
\[
L_{X,Y}\colon\mathbf{Span}\left(\mathcal{E}\right)_{X,Y}\to\mathscr{C}_{LX,LY},\qquad X,Y\in\mathcal{E}
\]
with comultiplication and counit maps 
\[
\Phi_{s,h,t}\colon L\left(s,t\right)\to L\left(s,h\right);L\left(h,t\right),\qquad\Lambda_{h}\colon L\left(h,h\right)\to1_{LX}
\]
for every respective diagram in $\mathcal{E}$ 
\[
\xymatrix@=1.5em{ & T\ar[rd]^{t}\ar[ld]_{s}\ar[d]|-{h} &  &  &  & T\ar[d]^{h}\\
X & Y & Z &  &  & X
}
\]
such that:
\begin{enumerate}
\item for any triple of morphisms of spans as below
\[
\xymatrix@=1.5em{ & R\ar[dl]_{u}\ar[dr]^{v}\ar[dd]|-{f} &  &  & R\ar[dl]_{u}\ar[dr]^{k}\ar[dd]|-{f} &  &  & R\ar[dl]_{k}\ar[dr]^{v}\ar[dd]|-{f}\\
X &  & Z & X &  & Y & Y &  & Z\\
 & T\ar[ul]^{s}\ar[ru]_{t} &  &  & T\ar[ul]^{s}\ar[ru]_{h} &  &  & T\ar[ul]^{h}\ar[ru]_{t}
}
\]
we have the commuting diagram
\[
\xymatrix{L\left(u,v\right)\ar[d]_{Lf}\myar{\Phi_{u,k,v}}{rr} &  & L\left(u,k\right);L\left(k,v\right)\ar[d]^{Lf;Lf}\\
L\left(s,t\right)\myard{\Phi_{s,h,t}}{rr} &  & L\left(s,h\right);L\left(h,t\right)
}
\]
\item for any morphism of spans as on the left below
\[
\xymatrix@=1.5em{ & M\ar[dl]_{p}\ar[dr]^{p}\ar[d]|-{f} &  &  & L\left(p,p\right)\ar[rr]^{Lf}\ar[rd]_{\Lambda_{p}} &  & L\left(q,q\right)\ar[ld]^{\Lambda_{q}}\\
X & N\ar[l]^{q}\ar[r]_{q} & X &  &  & 1_{LX}
}
\]
the diagram on the right above commutes;
\item for all diagrams of the form
\[
\xymatrix@=1.5em{ &  & T\ar[dll]_{s}\ar[drr]^{t}\ar[dl]^{h}\ar[dr]_{k}\\
W & X &  & Y & Z
}
\]
in $\mathcal{E}$, we have the commuting diagram
\[
\xymatrix{L\left(s,t\right)\ar[d]_{\Phi_{s,h,t}}\ar@{=}[rr] &  & L\left(s,t\right)\ar[d]^{\Phi_{s,k,t}}\\
L\left(s,h\right);L\left(h,t\right)\myard{L\left(s;h\right);\Phi_{h,k,t}}{d} &  & L\left(s,k\right);L\left(k,t\right)\ar[d]^{\Phi_{s,h,k};L\left(k;t\right)}\\
L\left(s,h\right);\left(L\left(h,k\right);L\left(k,t\right)\right)\myard{\textnormal{assoc}}{rr} &  & \left(L\left(s,h\right);L\left(h,k\right)\right);L\left(k,t\right)
}
\]
\item for all spans $\left(s,t\right)$ we have the commuting diagrams
\[
\xymatrix@=0.5em{ & L\left(s,s\right);L\left(s,t\right)\ar[rdd]^{\Lambda_{s};L\left(s,t\right)} &  &  &  & L\left(s,t\right);L\left(t,t\right)\ar[rdd]^{L\left(s,t\right);\Lambda_{t}}\\
\\
L\left(s,t\right)\myard{\textnormal{unitor}}{rr}\ar[uur]^{\Phi_{s,s,t}} &  & 1_{LX};L\left(s;t\right) &  & L\left(s,t\right)\myard{\textnormal{unitor}}{rr}\ar[uur]^{\Phi_{s,t,t}} &  & L\left(s,t\right);1_{LY}
}
\]
\end{enumerate}
\end{cor}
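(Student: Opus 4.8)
The plan is to derive this Corollary as a direct instance of Theorem~\ref{generalcoherence}, applied to $\mathscr{A}=\mathbf{Span}\left(\mathcal{E}\right)$ with the coherent class $\left(\Delta_{2},\Delta_{0}\right)$ consisting of the explicit generics $\delta_{s,h,t}$ and augmentations $\epsilon_{h}$ described just above the statement. That $\mathbf{Span}\left(\mathcal{E}\right)$ is generic has already been recorded via the natural isomorphism \eqref{spanfamrep}. What remains is: \emph{(i)} to verify that $\left(\Delta_{2},\Delta_{0}\right)$ is a coherent class in the sense of the definition preceding the theorem; and \emph{(ii)} to check that the data of part (2) of Theorem~\ref{generalcoherence} specialises, under the identifications $\Phi_{\delta_{s,h,t}}=\Phi_{s,h,t}$ and $\Lambda_{\epsilon_{h}}=\Lambda_{h}$, to precisely the data and conditions (1)--(4) of the Corollary. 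The asserted bijection is then the first bijection of that theorem, combined with the fact that an oplax functor is a locally defined functor together with constraints as in part (1).

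For step \emph{(i)}: completeness of generics follows from \eqref{spanfamrep}, since under that isomorphism the set $\mathfrak{M}^{X,Y,Z}_{\left(s,t\right)}$ of equivalence classes of generics out of $\left(s,t\right)$ is identified with the set of morphisms $h\colon T\to Y$ out of the apex $T$, the generic attached to $h$ being exactly $\delta_{s,h,t}$ (the diagonal $T\to T\times_{Y}T$ whiskered by identities); hence every generic is isomorphic to a unique member of $\Delta_{2}$. Completeness of augmentations follows since a $2$-cell $n\to 1_{X}$ forces $n=\left(h,h\right)$ for some $h$ and is then necessarily $\epsilon_{h}$, consistently with $1_{X}$ being sub-terminal in $\mathbf{Span}\left(\mathcal{E}\right)_{X,X}$. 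The remaining three conditions (associator coherence, left and right unitor coherence) are established by computation with the defining pullbacks: after $\delta_{s,s,t}$ the left unitor $\left(s,t\right)\to 1_{X};\left(s,t\right)$ factors through $\epsilon_{s};\left(s,t\right)$, symmetrically on the right through $\delta_{s,t,t}$ and $\epsilon_{t}$, and in the associator square the two iterated diagonals landing in $\left(T\times_{X}T\right)\times_{Y}T$ and $T\times_{X}\left(T\times_{Y}T\right)$ agree modulo the canonical associativity isomorphism of span composition. (Alternatively, Lemma~\ref{cohunit}, Lemma~\ref{cohassoc} and the remarks following them already supply a coherent class, so one need only observe that the explicit families $\left\{\delta_{s,h,t}\right\}$ and $\left\{\epsilon_{h}\right\}$ realise it.)

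For step \emph{(ii)}: a $2$-cell $\zeta\colon\left(u,v\right)\to\left(s,t\right)$ in $\mathbf{Span}\left(\mathcal{E}\right)$ is a morphism $f\colon R\to T$ with $sf=u$ and $tf=v$, and for a chosen generic $\delta_{s,h,t}$ the componentwise maps induced by genericity are $f\colon\left(u,hf\right)\to\left(s,h\right)$ and $f\colon\left(hf,v\right)\to\left(h,t\right)$; thus the ``triple of morphisms of spans'' in condition (1) is just this single $f$ regarded in the three compatible ways, and the square there is precisely the naturality-of-comultiplication axiom (a). In the same way axiom (b) becomes condition (2); the associativity axiom (c), read off with $\delta_{1}=\delta_{s,k,t}$, $\delta_{2}=\delta_{s,h,k}$, $\delta_{3}=\delta_{s,h,t}$, $\delta_{4}=\delta_{h,k,t}$, becomes condition (3); and the left and right counit axioms (d), (e), read off with $\delta=\delta_{s,s,t}$, $\epsilon=\epsilon_{s}$ and with $\delta=\delta_{s,t,t}$, $\epsilon=\epsilon_{t}$ respectively, become the two squares of condition (4). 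Invoking Theorem~\ref{generalcoherence} then yields the bijection.

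I expect the main obstacle to be step \emph{(i)}, and within it the associator coherence: one must carry out the pullback bookkeeping comparing $\left(T\times_{X}T\right)\times_{Y}T$ with $T\times_{X}\left(T\times_{Y}T\right)$ and confirm that the two iterated diagonal $2$-cells agree after the coherence isomorphism, and likewise confirm that the composite of $\delta_{s,s,t}$ with $\epsilon_{s};\left(s,t\right)$, together with its right-hand analogue, recovers the unitors. The translation carried out in step \emph{(ii)} introduces no new idea; it is purely the unwinding of span and pullback notation.
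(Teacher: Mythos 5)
Your proposal is correct and takes essentially the same route as the paper: exhibit $\mathbf{Span}(\mathcal{E})$ as generic via \eqref{spanfamrep}, take the coherent class $(\Delta_2,\Delta_0)$ of diagonal generics $\delta_{s,h,t}$ and augmentations $\epsilon_h$, and specialise Theorem \ref{generalcoherence}, matching $\delta_1=\delta_{u,k,v}$, $\delta_2=\delta_{s,h,t}$ in axiom (a), $\delta_3=\delta_{s,h,t}$, $\delta_4=\delta_{h,k,t}$ in axiom (c), and $\delta_{s,s,t},\epsilon_s$ (resp.\ $\delta_{s,t,t},\epsilon_t$) in the counit axioms. The paper gives no further argument beyond ``applying Theorem \ref{generalcoherence}'' after setting up \eqref{spanfamrep} and the classes $\Delta_2,\Delta_0$, so your explicit verification of the coherence-class conditions and the unwinding of the axioms into (1)--(4) just fills in details the paper leaves implicit.
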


\begin{rem}
Note that this description of an oplax functor out of the bicategory
of spans does not involve pullbacks, thus allowing for a simpler for
a simpler proof of the universal properties of the span construction
\cite{unispans}. 
\end{rem}

\subsection{Bicategories of polynomials}

Given a locally cartesian closed category $\mathcal{E}$, one may
form the bicategory of polynomials in $\mathcal{E}$ with cartesian
2-cells \cite{weber,gambinokock}. This bicategory we denote by $\mathbf{Poly}_{c}\left(\mathcal{E}\right)$
and has objects those of $\mathcal{E}$, 1-cells given by diagrams
\[
\xymatrix@=1em{ & E\ar[ld]_{s}\ar[r]^{p} & B\ar[rd]^{t}\\
X &  &  & Z
}
\]
in $\mathcal{E}$ called polynomials and denoted by $\left(s,p,t\right)$,
and 2-cells given by commuting diagrams as below
\[
\xymatrix@=1em{ & K\ar[dd]_{f}\ar[ld]_{a}\ar[r]^{i}\ar@{}[ddr]|-{\textnormal{pb}} & I\ar[rd]^{b}\ar[dd]^{g}\\
X &  &  & Y\\
 & R\ar[lu]^{u}\ar[r]_{j} & J\ar[ur]_{v}
}
\]
where the middle square is a pullback. Composition of 1-cells is more
complicated and so will be omitted; especially as it is not necessary
to describe oplax functors out of $\mathbf{Poly}_{c}\left(\mathcal{E}\right)$
once we know the generics.

The reader need only know the following corollary of \cite[Prop. 3.1.6]{weber},
a description of polynomial composition due to Weber.
\begin{cor}
Consider two polynomials in $\mathcal{E}$ as below:
\[
\xymatrix@=1em{ & K\ar[ld]_{a}\ar[r]^{i} & I\ar[rd]^{b} &  &  &  &  & R\ar[r]^{j}\ar[ld]_{u} & J\ar[rd]^{v}\\
X &  &  & Y &  &  & Y &  &  & Z
}
\]
Then to give a cartesian 2-cell $\left(s,p,t\right)\to\left(a,i,b\right);\left(u,j,v\right)$
is to give a factorization $p=p_{1};p_{2}$ through an object $T$,
a morphism $h\colon T\to Y$, and a pair of cartesian morphisms $\left(s,p_{1},h\right)\to\left(a,i,b\right)$
and $\left(h,p_{2},t\right)\to\left(u,j,v\right)$ 
\[
\xymatrix@=1em{ &  & E\myar{p_{1}}{r}\ar@/^{1.5pc}/[rr]|-{p}\ar@{..>}[ldd]|-{w}\ar@/_{1pc}/[llddd]_{s}\ar@{}[dd]|-{\textnormal{pb}} & T\myar{p_{2}}{r}\ar@{..>}[rdd]|-{y}\ar@{..>}[ldd]|-{x}\ar[ddd]|-{h} & B\ar@{..>}[rdd]|-{z}\ar@/^{1pc}/[rrddd]^{t}\ar@{}[dd]|-{\textnormal{pb}}\\
\\
 & K\ar[ld]_{a}\ar[r]^{i} & I\ar[rd]^{b} &  & R\ar[r]^{j}\ar[ld]_{u} & J\ar[rd]^{v}\\
X &  &  & Y &  &  & Z
}
\]
such that the above diagram commutes. Here we identify a septuple
$\left(p_{1},h,p_{2},w,x,y,z\right)$ as above with another septuple
$\left(p_{1}',h',p_{2}',w',x',y',z'\right)$ if $w=w'$, $z=z'$ and
there exists an invertible $\alpha\colon T\to T'$ rendering commutative
the diagrams\footnote{It is clear that if the middle diagram commutes then the rightmost diagram also does. Also, such an isomorphism $\alpha$ making the left diagram commute must be unique.}
\begin{equation}
\xymatrix@=1em{ & T\ar[rd]^{p_{2}}\ar[dd]|-{\alpha} &  &  &  & T\ar[dd]|-{\alpha}\ar[rd]^{y}\ar[ld]_{x} &  &  & T\ar[dd]_{\alpha}\ar[rrd]^{h}\\
E\ar[ur]^{p_{1}}\ar[rd]_{p_{1}'} &  & B &  & I &  & R &  &  &  & Y\\
 & T'\ar[ur]_{p_{2}'} &  &  &  & T'\ar[ur]_{y'}\ar[ul]^{x'} &  &  & T'\ar[urr]_{h'}
}
\label{polytriple}
\end{equation}
\end{cor}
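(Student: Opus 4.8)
The plan is to obtain this by unpacking Weber's construction of the composite polynomial \cite[Prop. 3.1.6]{weber}. Write the composite $(a,i,b);(u,j,v)\colon X\to Z$ as a polynomial $X\xleftarrow{s^{\ast}}E^{\ast}\xrightarrow{p^{\ast}}B^{\ast}\xrightarrow{t^{\ast}}Z$. By the definition of a cartesian $2$-cell, to give such a $2$-cell $(s,p,t)\to(s^{\ast},p^{\ast},t^{\ast})$ is to give a map $g\colon B\to B^{\ast}$ with $t^{\ast}g=t$ together with a map $f\colon E\to E^{\ast}$ with $p^{\ast}f=gp$ and $s^{\ast}f=s$ exhibiting $E$ as the pullback of $p^{\ast}$ along $g$. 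Hence the entire task is to decode the pair $(g,f)$ through the pullbacks and the single dependent product occurring in Weber's construction of $E^{\ast}\to B^{\ast}$.

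Concretely, Weber's construction can be arranged so that $B^{\ast}$ is the dependent product $\Pi_{j}(\pi_{R})$, where $\pi_{R}\colon I\times_{Y}R\to R$ is the second projection of the pullback of $b$ along $u$; so that $T^{\ast}:=B^{\ast}\times_{J}R$, equipped with the evaluation map $T^{\ast}\to I\times_{Y}R$ over $R$; so that $E^{\ast}:=K\times_{I}T^{\ast}$, the pullback of $i$ along the composite $T^{\ast}\to I\times_{Y}R\to I$; with $p^{\ast}$ the evident composite $E^{\ast}\to T^{\ast}\to B^{\ast}$, and $s^{\ast}=a\circ(E^{\ast}\to K)$, $t^{\ast}=v\circ(B^{\ast}\to J)$. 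I would then apply the universal property of $\Pi_{j}$: a map $g\colon B\to B^{\ast}$ with $t^{\ast}g=t$ is the same as a map $z\colon B\to J$ with $vz=t$ together with a lift of $z$ through $B^{\ast}\to J$, which in turn is the same as a map $x\colon T\to I$ with $bx=uy$, where $T:=B\times_{J}R$ carries the projections $p_{2}\colon T\to B$ and $y\colon T\to R$; put $h:=bx=uy\colon T\to Y$. Pulling the factorization $E^{\ast}\to T^{\ast}\to B^{\ast}$ back along $g$ then identifies $E$ with $K\times_{I}T$, which is exactly the data of $p_{1}\colon E\to T$ and $w\colon E\to K$ with $iw=xp_{1}$; and the remaining cartesian $2$-cell equations become $s=aw$, $t=vz$ and $p=p_{1};p_{2}$. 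Thus the septuple $(p_{1},h,p_{2},w,x,y,z)$, subject to the displayed commutativities and the two pullback squares — which are precisely the assertions that the two pasted $2$-cells are cartesian morphisms — carries exactly the information of $(g,f)$, and the same universal properties run backwards give the converse assignment.

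For the identification clause, the maps $z\colon B\to J$ and $w\colon E\to K$ are read off from $g$ and $f$ without any choice, hence rigid, whereas the object $T=B\times_{J}R$ — and with it $p_{1},p_{2},x,y$ and $h=bx$ — is determined only up to a (unique) isomorphism over $B$ and over $R$; this is exactly the equivalence in \eqref{polytriple}, and a septuple built from an isomorphic $T'$ produces the same $g$ by the uniqueness part of the universal property of $\Pi_{j}$, hence the same $2$-cell. The hard part will be the bookkeeping in the middle step: one must pin down Weber's composite precisely enough — in particular which Beck--Chevalley and distributivity isomorphisms are invoked, and in what order — to see that $E^{\ast}\to T^{\ast}\to B^{\ast}$ really has the stated shape and that its two pullback squares coincide with the two ``pb'' squares in the displayed diagram; once the data are matched up, naturality of the bijection in $(s,p,t)$ and the verification of the equivalence relation are routine.
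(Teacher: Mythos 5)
Your proposal is correct, and it is essentially the only reasonable way to establish this statement: the paper itself offers no proof, presenting the corollary as an immediate consequence of Weber's description of polynomial composition, so your job is precisely the unpacking you describe. Your identification of the composite --- $B^{\ast}=\Pi_{j}(\pi_{R})$ with $\pi_{R}\colon I\times_{Y}R\to R$, then $T^{\ast}=B^{\ast}\times_{J}R$ with its evaluation to $I\times_{Y}R$, then $E^{\ast}=K\times_{I}T^{\ast}$, with $p^{\ast}$ the composite $E^{\ast}\to T^{\ast}\to B^{\ast}$ --- is the standard (and correct) form of Weber's construction, and the decoding via the universal property of $\Pi_{j}$ is right: a lift of $z\colon B\to J$ through $B^{\ast}\to J$ is a map $x\colon B\times_{J}R\to I$ with $bx=uy$, and pulling $E^{\ast}\to T^{\ast}\to B^{\ast}$ back along $g$ gives $E\cong K\times_{I}T$, so the two ``pb'' squares of the displayed diagram are exactly the pullbacks of $j$ along $z$ and of $i$ along $x$, i.e.\ exactly the cartesianness of the two component $2$-cells. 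Your treatment of the identification clause is also the right one: $w$ and $z$ are recovered from $(f,g)$ with no choices, while $T$ is a pullback of $j$ along $z$ and hence determined up to the unique isomorphism $\alpha$ compatible with $(p_{2},y)$, which then automatically respects $p_{1}$ and $x$; this matches the footnote's uniqueness claim. The one step you flag as ``bookkeeping'' --- confirming that Weber's composite really has the shape you assign it --- is genuinely routine once the distributivity pullback is written down, so I see no gap, only a sketch that would need the composite written out explicitly to be fully self-contained.
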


It follows that
\[
\mathbf{Poly}_{c}\left(\mathcal{E}\right)_{X,Z}\left(\left(s,p,t\right),\left(a,i,b\right);\left(u,j,v\right)\right)
\]
is isomorphic to
\begin{equation}
\sum_{p=p_{1};p_{2},\;h\colon T\to Y}^{\sim}\mathbf{Poly}_{c}\left(\mathcal{E}\right)_{X,Y}\left(\left(s,p_{1},h\right),\left(a,i,b\right)\right)\times\mathbf{Poly}_{c}\left(\mathcal{E}\right)_{Y,Z}\left(\left(h,p_{2},t\right),\left(u,j,v\right)\right)\label{polyfamrep}
\end{equation}
where the equivalence relation ``$\sim$'' indicates the sum is
taken over representatives of equivalence classes of triples $\left(p_{1},h,p_{2}\right)$
(where two such triples are seen as equivalent if there is an isomorphism
$\alpha$ rendering commutative the left and right diagrams as in
Figure \ref{polytriple}). We have thus exhibited the bicategory of
polynomials with cartesian 2-cells as a generic bicategory.

Here our class of generics $\Delta_{2}$ consists of, for each diagram
\[
\xymatrix@=1.5em{ &  & E\ar[ld]_{s}\ar[r]^{p_{1}} & T\ar[d]|-{h}\ar[r]^{p_{2}} & B\ar[rd]^{t}\\
 & X &  & Y &  & Z
}
\]
in $\mathcal{E}$ where $p=p_{1};p_{2}$, the cartesian morphisms
of polynomials 
\[
\delta_{s,p_{1},h,p_{2},t}\colon\left(s,p,t\right)\to\left(s,p_{1},h\right);\left(h,p_{2},t\right)
\]
 corresponding to
\[
\xymatrix@=1em{ &  & E\myar{p_{1}}{r}\ar@/^{1.5pc}/[rr]|-{p}\ar@{..>}|-{\textnormal{id}}[ldd]\ar@/_{1pc}/[llddd]_{s} & T\myar{p_{2}}{r}\ar@{..>}[rdd]|-{\textnormal{id}}\ar@{..>}[ldd]|-{\textnormal{id}}\ar[ddd]|-{h} & B\ar@{..>}[rdd]|-{\textnormal{id}}\ar@/^{1pc}/[rrddd]^{t}\\
\\
 & E\ar[ld]_{s}\ar[r]^{p_{1}} & T\ar[rd]^{h} &  & T\ar[r]^{p_{2}}\ar[ld]_{h} & B\ar[rd]^{t}\\
X &  &  & Y &  &  & Z
}
\]
under this bijection. We take as our augmentations the cartesian morphisms
\[
\xymatrix@=1em{ & T\ar[ld]_{h}\ar[dd]_{h}\ar[r]^{\textnormal{id}} & T\ar[rd]^{h}\ar[dd]^{h}\\
X &  &  & X\\
 & X\ar[lu]^{\textnormal{id}}\ar[r]_{\textnormal{id}} & X\ar[ur]_{\textnormal{id}}
}
\]
and denote these by $\epsilon_{h}$. There are more general morphisms
into identity polynomials where the middle map is invertible; but
using those would lead to unnecessary complexity.
\begin{rem}
Note that our class of generics $\Delta_{2}$ does not involve representatives
of equivalence classes, unlike the summation formula given. 
\end{rem}

Now, applying Theorem \ref{generalcoherence} we have the following.
\begin{cor}
Let $\mathcal{E}$ be a locally cartesian closed category and denote
by $\mathbf{Poly}_{c}\left(\mathcal{E}\right)$ the bicategory of
polynomials in $\mathcal{E}$ with cartesian 2-cells. Let $\mathscr{C}$
be a bicategory. Then to give an oplax functor
\[
L\colon\mathbf{Poly}_{c}\left(\mathcal{E}\right)\to\mathscr{C}
\]
 is to give a locally defined functor
\[
L_{X,Y}\colon\mathbf{Poly}_{c}\left(\mathcal{E}\right)_{X,Y}\to\mathscr{C}_{LX,LY},\qquad X,Y\in\mathcal{E}
\]
with comultiplication and counit maps 
\[
\Phi_{s,p_{1},h,p_{2},t}\colon L\left(s,p,t\right)\to L\left(s,p_{1},h\right);L\left(h,p_{2},t\right),\qquad\Lambda_{h}\colon L\left(h,1,h\right)\to1_{LX}
\]
for every respective diagram in $\mathcal{E}$ 
\[
\xymatrix@=1.5em{ &  & E\ar[ld]_{s}\ar[r]^{p_{1}} & T\ar[d]|-{h}\ar[r]^{p_{2}} & B\ar[rd]^{t} &  &  &  &  & T\ar[ld]_{h}\ar[r]^{\textnormal{id}} & T\ar[rd]^{h}\\
 & X &  & Y &  & Z &  &  & X &  &  & X
}
\]
where we assert $p=p_{1};p_{2}$ on the left, such that:
\begin{enumerate}
\item for any morphisms of polynomials as below
\[
\xymatrix@=1.5em{ & R\ar[dl]_{u}\ar[dd]|-{f}\ar[r]^{q} & I\ar[dr]^{v}\ar[dd]|-{g} &  &  & R\ar[dl]_{u}\ar[dd]|-{f}\ar[r]^{q_{1}} & S\ar[dr]^{k}\ar[dd]|-{c} &  &  & S\ar[dl]_{k}\ar[dd]|-{c}\ar[r]^{q_{2}} & I\ar[dr]^{v}\ar[dd]|-{g}\\
X &  &  & Z & X &  &  & Y & Y &  &  & Z\\
 & E\ar[ul]^{s}\ar[r]_{p} & B\ar[ru]_{t} &  &  & E\ar[ul]^{s}\ar[r]_{p_{1}} & T\ar[ru]_{h} &  &  & T\ar[ul]^{h}\ar[r]_{p_{2}} & B\ar[ru]_{t}
}
\]
we have the commuting diagram
\[
\xymatrix{L\left(u,q,v\right)\ar[d]_{L\left(f,g\right)}\myar{\Phi_{u,q_{1},k,q_{2},v}}{rr} &  & L\left(u,q_{1},k\right);L\left(k,q_{2},v\right)\ar[d]^{L\left(f,c\right);L\left(c,g\right)}\\
L\left(s,p,t\right)\myard{\Phi_{s,p_{1},h,p_{2},t}}{rr} &  & L\left(s,p_{1},h\right);L\left(h,p_{2},t\right)
}
\]
\item for any morphism of polynomials as on the left below
\[
\xymatrix@=1.5em{ & R\ar[dl]_{s}\ar[dd]_{f}\ar[r]^{\textnormal{id}}\ar@{}[rdd]|-{\textnormal{pb}} & I\ar[dr]^{s}\ar[dd]^{f} &  &  & L\left(s,1,s\right)\ar[rr]^{L\left(f,f\right)}\ar[rdd]_{\Lambda_{s}} &  & L\left(t,1,t\right)\ar[ldd]^{\Lambda_{t}}\\
X &  &  & Z\\
 & T\ar[ul]^{t}\ar[r]_{\textnormal{id}} & J\ar[ru]_{t} &  &  &  & 1_{LX}
}
\]
the diagram on the right above commutes;
\item for all diagrams of the form
\[
\xymatrix@=1.5em{ & F\ar[dl]_{s}\ar[r]^{a} & G\ar[r]^{b}\ar[d]_{h} & H\ar[r]^{c}\ar[d]^{k} & K\ar[rd]^{t}\\
W &  & X & Y &  & Z
}
\]
in $\mathcal{E}$, we have the commuting diagram
\[
\xymatrix{L\left(s,a;b;c,t\right)\ar[d]_{\Phi_{s,a,h,b;c,t}}\ar@{=}[rr] &  & L\left(s,a;b;c,t\right)\ar[d]^{\Phi_{s,a;b,k,c,t}}\\
L\left(s,a,h\right);L\left(h,b;c,t\right)\myard{L\left(s,a,h\right);\Phi_{h,b,k,c,t}}{d} &  & L\left(s,a;b,k\right);L\left(k,c,t\right)\ar[d]^{\Phi_{s,a,h,b,k};L\left(k,c,t\right)}\\
L\left(s,a,h\right);\left(L\left(h,b,k\right);L\left(k,c,t\right)\right)\myard{\textnormal{assoc}}{rr} &  & \left(L\left(s,a,h\right);L\left(h,b,k\right)\right);L\left(k,c,t\right)
}
\]
\item for all polynomials $\left(s,p,t\right)$ the diagrams
\[
\xymatrix@=0.5em{ & L\left(s,1,s\right);L\left(s,p,t\right)\ar[rdd]^{\Lambda_{s};L\left(s,p,t\right)}\\
\\
L\left(s,p,t\right)\myard{\textnormal{unitor}}{rr}\ar[uur]^{\Phi_{s,1,s,p,t}} &  & 1_{LX};L\left(s,p,t\right)\\
\\
 & L\left(s,p,t\right);L\left(t,1,t\right)\ar[rdd]^{L\left(s,p,t\right);\Lambda_{t}}\\
\\
L\left(s,p,t\right)\myard{\textnormal{unitor}}{rr}\ar[uur]^{\Phi_{s,p,t,1,t}} &  & L\left(s,p,t\right);1_{LY}
}
\]
commute.
\end{enumerate}
\end{cor}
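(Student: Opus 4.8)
The plan is to apply Theorem \ref{generalcoherence} with $\mathscr{A}=\mathbf{Poly}_c(\mathcal{E})$. Genericity of $\mathbf{Poly}_c(\mathcal{E})$ has already been established above: Weber's description of polynomial composition produces the natural isomorphism \eqref{polyfamrep}, exhibiting each $\mathbf{Poly}_c(\mathcal{E})_{X,Z}(c,-;-)$ as a coproduct of representables. I take $\Delta_2$ to be the class of cartesian morphisms $\delta_{s,p_1,h,p_2,t}$ and $\Delta_0$ the class of augmentations $\epsilon_h$ described just above the statement. Indexing a comultiplication $2$-cell by an element of $\Delta_2$ is the same as indexing it by maps $p_1\colon E\to T$, $p_2\colon T\to B$, $h\colon T\to Y$ with $p=p_1;p_2$ together with legs $s\colon E\to X$, $t\colon B\to Z$, and indexing a counit by an element of $\Delta_0$ is the same as indexing it by a morphism $h$; so the data $(\Phi,\Lambda)$ of clause (2) of Theorem \ref{generalcoherence} is precisely the comultiplication and counit data appearing in the statement. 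It then remains to verify that $(\Delta_2,\Delta_0)$ is a coherent class and to read off what the axioms (a)--(e) of Theorem \ref{generalcoherence} assert in this setting.

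For coherence, completeness of generics is immediate from \eqref{polyfamrep}: an arbitrary generic $\delta'\colon(s,p,t)\to(a,i,b);(u,j,v)$ corresponds to a septuple $(p_1,h,p_2,w,x,y,z)$, and the cartesian comparisons $(w,x)$ and $(y,z)$ assemble into an isomorphism between $\delta'$ and $\delta_{s,p_1,h,p_2,t}\in\Delta_2$. Completeness of augmentations holds because any morphism into an identity polynomial is, up to such a comparison isomorphism, one of the $\epsilon_h$ (and augmentations are essentially unique in any case, $1_X$ being sub-terminal in $\mathscr{A}_{X,X}$). The left and right unitor coherences hold because composing with an identity polynomial is a pullback along an identity: one checks that the left unitor at $(s,p,t)$ factors as $\delta_{s,\textnormal{id}_E,s,p,t}$ followed by $\epsilon_s;(s,p,t)$, and the right unitor as $\delta_{s,p,t,\textnormal{id}_B,t}$ followed by $(s,p,t);\epsilon_t$; in particular every polynomial occurs as the domain of some generic in $\Delta_2$, so $\Omega_2$ is all of $\mathbf{Poly}_c(\mathcal{E})$. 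The remaining condition, associator coherence, asks that for composable $\delta_1,\delta_2\in\Delta_2$ over $(s,a;b;c,t)$ there exist $\delta_3,\delta_4\in\Delta_2$ with $\delta_3;(l;\delta_4)$ equal to $(\delta_2;r)\circ\delta_1$ pasted with the associator; concretely this is the assertion that the two iterated applications of Weber's composite agree, tracked explicitly at the level of the chosen generics. This last verification --- unwinding the canonical comparison between the two iterated composites of polynomials while following the universal data of the generics through it --- is the step I expect to be the main obstacle.

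Granting that $(\Delta_2,\Delta_0)$ is coherent, Theorem \ref{generalcoherence} produces the bijection between oplax functors $L\colon\mathbf{Poly}_c(\mathcal{E})\to\mathscr{C}$ and locally defined functors equipped with $(\Phi,\Lambda)$ subject to (a)--(e); the remaining task is to unwind each of these axioms. Writing $\Phi_{s,p_1,h,p_2,t}$ and $\Lambda_h$ for the components at $\delta_{s,p_1,h,p_2,t}$ and $\epsilon_h$: for axiom (a) (naturality of comultiplication), the instances built from $\Delta_2$ correspond exactly to a morphism of polynomials $(f,g)$ together with the two morphisms $(f,c)$ and $(c,g)$ it induces by genericity, as displayed in condition (1), so axiom (a) is precisely condition (1); axiom (b) (naturality of counits) unwinds in the same way to condition (2); in axiom (c) (associativity of comultiplication) the generics $\delta_1,\delta_2$ and the $\delta_3,\delta_4$ produced in the associator-coherence check are exactly the four comultiplications $\Phi_{s,a;b,k,c,t}$, $\Phi_{s,a,h,b,k}$, $\Phi_{s,a,h,b;c,t}$, $\Phi_{h,b,k,c,t}$ of condition (3); and axioms (d) and (e) (the two counit axioms), applied to the unitor factorisations exhibited above, are the two triangles of condition (4). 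Thus the stated bijection is precisely the instance of Theorem \ref{generalcoherence} for $\mathscr{A}=\mathbf{Poly}_c(\mathcal{E})$ equipped with this coherent class.
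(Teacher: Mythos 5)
Your proposal is correct and follows the paper's route exactly: the paper obtains this corollary by instantiating Theorem \ref{generalcoherence} at precisely the class $\left(\Delta_{2},\Delta_{0}\right)$ of chosen generics $\delta_{s,p_{1},h,p_{2},t}$ and augmentations $\epsilon_{h}$, leaving both the coherence of that class and the translation of axioms (a)--(e) into conditions (1)--(4) implicit, just as you spell them out. The associator-coherence verification you flag is the only non-formal step, and your candidates $\delta_{3}=\delta_{s,a,h,b;c,t}$ and $\delta_{4}=\delta_{h,b,k,c,t}$ are the right ones; the required identity is a direct computation from Weber's explicit description of polynomial composition, so there is no genuine gap.
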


\begin{rem}
As the above description of oplax functors out of the bicategory of
polynomials does not rely on polynomial composition, it may be used
for an efficient proof of the universal properties of polynomials.
Indeed, this allows us to avoid the large coherence diagrams which
would arise in a direct proof. We will discuss this in detail in our
next paper.
\end{rem}

\subsection{Finite sets and bijections}

We give this example for completeness, but will omit some details
as Theorem \ref{generalcoherence} becomes rather trivial in this
case (due to all generic morphisms being invertible). Here we take
$\mathscr{A}$ to be the category of finite sets and bijections with
the disjoint union monoidal structure, denoted $\left(\mathbb{P},\sqcup,\emptyset\right)$.
This monoidal category is generic since we have isomorphisms
\[
\mathbb{P}\left(C,A\sqcup B\right)\cong\sum_{C=L\sqcup R}\mathbb{P}\left(L,A\right)\times\mathbb{P}\left(R,B\right)
\]
natural in finite sets $A$ and $B$, where the sum is taken over
decompositions of $C$ into the disjoint union of two sets. Here we
choose our class of generics $\Delta_{2}$ to contain the chosen bijection,
where $\left[n\right]=\left\{ 1,\cdots,n\right\} $, 
\[
\delta{}_{n_{1},n_{2}}\colon\left[n_{1}+n_{2}\right]\to\left[n_{1}\right]\sqcup\left[n_{2}\right],\qquad n\mapsto\begin{cases}
\left(1,n\right), & n\leq n_{1}\\
\left(2,n\right), & n>n_{1}
\end{cases}
\]
for each pair of non-negative integers $n_{1}$ and $n_{2}$. Trivially,
the only augmentation is the identity map on the empty set. Taking
$\left(\mathcal{C},\otimes,I\right)$ to be a monoidal category, it
follows from Theorem \ref{generalcoherence} that oplax monoidal functors
$L\colon\left(\mathbb{P},\sqcup,\emptyset\right)\to\left(\mathcal{C},\otimes,I\right)$
may be specified by giving comultiplication and counit maps
\[
\Phi_{n_{1},n_{2}}\colon L\left[n_{1}+n_{2}\right]\to L\left[n_{1}\right]\otimes\left[n_{2}\right],\qquad\Lambda\colon L\left(\emptyset\right)\to I
\]
Of course, this may more easily be seen by simply taking the skeleton.

\section{Convolution structures and Yoneda structures\label{GenericDocYoneda}}

By results of Day \cite{dayconvolution}, given a bicategory $\mathscr{A}$
with locally small hom-categories one may consider the local cocompletion
of $\mathscr{A}$, a new bicategory $\hat{\text{\ensuremath{\mathscr{A}}}}$
with objects those of $\mathscr{A}$, hom-categories given by
\[
\hat{\text{\ensuremath{\mathscr{A}}}}_{X,Y}:=\left[\mathscr{A}_{X,Y}^{\textnormal{op}},\mathbf{Set}\right],\qquad X,Y\in\mathscr{A}_{\textnormal{ob}}
\]
and a composite of two presheaves 
\[
F\colon\mathscr{A}_{X,Y}^{\textnormal{op}}\to\mathbf{Set},\qquad G\colon\mathscr{A}_{Y,Z}^{\textnormal{op}}\to\mathbf{Set}
\]
given by Day's convolution formula
\[
GF\colon\mathscr{A}_{X,Z}^{\textnormal{op}}\to\mathbf{Set},\qquad GF\left(c\right)=\int^{a,b}\mathscr{A}_{X,Z}\left(c,a;b\right)\times Fa\times Gb
\]
With this definition, the family of Yoneda embeddings on the hom-categories
defines a pseudofunctor $y_{\mathscr{A}}\colon\mathscr{A}\to\hat{\mathscr{A}}$.
This is of interest since in the case of generic bicategories $\mathscr{A}$,
this convolution structure has an especially nice form.
\begin{prop}
\label{generalconvolution} Suppose $\mathscr{A}$ is a generic bicategory.
Then for any pair of presheaves
\[
F\colon\mathscr{A}_{X,Y}^{\textnormal{op}}\to\mathbf{Set},\qquad G\colon\mathscr{A}_{Y,Z}^{\textnormal{op}}\to\mathbf{Set}
\]
there exists isomorphisms as below
\[
\int^{a,b}\mathscr{A}_{X,Z}\left(c,a;b\right)\times Fa\times Gb\cong\sum_{m\in\mathfrak{M}_{c}^{X,Y,Z}}Fl_{m}\times Gr_{m}
\]
thus reducing the Day convolution structure to a simpler formula.
\end{prop}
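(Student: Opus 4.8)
The plan is to reduce the Day convolution coend directly by substituting the familial representability characterisation of generic bicategories (the Corollary following Diers' Proposition), after which the statement is pure coend calculus in $\mathbf{Set}$. Concretely, since $\mathscr{A}$ is generic, there is an isomorphism
\[
\mathscr{A}_{X,Z}\left(c,a;b\right)\cong\sum_{m\in\mathfrak{M}_{c}^{X,Y,Z}}\mathscr{A}_{X,Y}\left(l_{m},a\right)\times\mathscr{A}_{Y,Z}\left(r_{m},b\right)
\]
natural in $a\in\mathscr{A}_{X,Y}$ and $b\in\mathscr{A}_{Y,Z}$. As $Fa$ is contravariant in $a$ while $\mathscr{A}_{X,Z}(c,a;b)$ is covariant in $a$ (composition being a functor, so that post-composition of a 2-cell $a\to a'$ induces $a;b\to a';b$), the integrand of the convolution coend is a functor on $\mathscr{A}_{X,Y}^{\textnormal{op}}\times\mathscr{A}_{X,Y}\times\mathscr{A}_{Y,Z}^{\textnormal{op}}\times\mathscr{A}_{Y,Z}$, and the displayed isomorphism replaces it by an isomorphic integrand. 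This substitution is the only place the genericity hypothesis is used.

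First I would pull the coproduct indexed by $\mathfrak{M}_{c}^{X,Y,Z}$ outside the coend; this is legitimate because coends are colimits and colimits commute with colimits (equivalently, a coend is a coequaliser of coproducts, over which the external coproduct distributes). Next I would invoke the Fubini theorem for coends to rewrite $\int^{a,b}$ as $\int^{a}\int^{b}$, and use that $(-)\times S\colon\mathbf{Set}\to\mathbf{Set}$ preserves colimits (so it passes through the inner coend) to obtain, for each $m$, the expression
\[
\left(\int^{a}\mathscr{A}_{X,Y}\left(l_{m},a\right)\times Fa\right)\times\left(\int^{b}\mathscr{A}_{Y,Z}\left(r_{m},b\right)\times Gb\right).
\]
Finally, the co-Yoneda (density) lemma evaluates each factor: $\int^{a}\mathscr{A}_{X,Y}(l_{m},a)\times Fa\cong Fl_{m}$ and likewise $\int^{b}\mathscr{A}_{Y,Z}(r_{m},b)\times Gb\cong Gr_{m}$. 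Reassembling the coproduct over $\mathfrak{M}_{c}^{X,Y,Z}$ gives $\sum_{m}Fl_{m}\times Gr_{m}$, as claimed; tracing through the construction, the composite isomorphism is natural in $F$ and $G$, though this is not required for the statement.

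There is no serious obstacle: the content lies entirely in the familial representability of generic bicategories, and everything afterwards is routine coend bookkeeping. The one point deserving care is the first step — one must use that the representability isomorphism is natural in $a$ \emph{and} $b$ simultaneously, so that it is an isomorphism of the two-variable integrand rather than merely a pointwise bijection, which is exactly the naturality asserted in the Corollary. It is also worth remarking that although $\mathscr{A}_{X,Y}$ and $\mathscr{A}_{Y,Z}$ need not be small, the coends appearing after the reduction all have the closed form furnished by the co-Yoneda lemma, so no existence question arises.
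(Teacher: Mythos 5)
Your proposal is correct and follows essentially the same route as the paper's proof: substitute the familial representability isomorphism into the integrand, pull the coproduct out of the coend, split the double coend into a product of single coends, and evaluate each by the co-Yoneda lemma. Your extra remarks on variance and on the naturality in both variables are sound but add nothing beyond the paper's argument.
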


\begin{proof}
We have
\[
\begin{aligned}\textnormal{LHS} & =\int^{a,b}\mathscr{A}_{X,Z}\left(c,a;b\right)\times Fa\times Gb\\
 & \cong\int^{a,b}\left[\sum_{m\in\mathfrak{M}_{c}^{X,Y,Z}}\mathscr{A}_{X,Y}\left(l_{m},a\right)\times\mathscr{A}_{Y,Z}\left(r_{m},b\right)\right]\times Fa\times Gb\\
 & \cong\int^{a,b}\sum_{m\in\mathfrak{M}_{c}^{X,Y,Z}}\mathscr{A}_{X,Y}\left(l_{m},a\right)\times Fa\times\mathscr{A}_{Y,Z}\left(r_{m},b\right)\times Gb\\
 & \cong\sum_{m\in\mathfrak{M}_{c}^{X,Y,Z}}\int^{a,b}\mathscr{A}_{X,Y}\left(l_{m},a\right)\times Fa\times\mathscr{A}_{Y,Z}\left(r_{m},b\right)\times Gb\\
 & \cong\sum_{m\in\mathfrak{M}_{c}^{X,Y,Z}}\left(\int^{a}\mathscr{A}_{X,Y}\left(l_{m},a\right)\times Fa\right)\times\left(\int^{b}\mathscr{A}_{Y,Z}\left(r_{m},b\right)\times Gb\right)\\
 & \cong\sum_{m\in\mathfrak{M}_{c}^{X,Y,Z}}Fl_{m}\times Gr_{m}\\
 & =\textnormal{RHS}
\end{aligned}
\]
as required.
\end{proof}
\begin{rem}
Unfortunately, the above formula has some disadvantages. Indeed, as
$\mathfrak{M}_{c}^{X,Y,Z}$ is isomorphic to the set of equivalence
classes of generics out of $c$, it follows that writing down $\mathfrak{M}_{c}^{X,Y,Z}$
will involve a choice of representatives for each equivalence class.
This is problematic since choices of representatives do not nicely
behave with respect to composition.
\end{rem}

As a consequence of this proposition and the formulas \eqref{spanfamrep}
and \eqref{polyfamrep} given in the previous section, we have the
following.
\begin{cor}
The Day convolution of two presheaves of spans
\[
F\colon\mathbf{Span}\left(\mathcal{E}\right)_{X,Y}^{\textnormal{op}}\to\mathbf{Set},\qquad G\colon\mathbf{Span}\left(\mathcal{E}\right)_{Y,Z}^{\textnormal{op}}\to\mathbf{Set}
\]
is given by 
\[
GF\colon\mathbf{Span}\left(\mathcal{E}\right)_{X,Z}^{\textnormal{op}}\to\mathbf{Set},\quad GF\left(s,t\right)\cong\sum_{h\colon T\to Y}F\left(s,h\right)\times G\left(h,t\right)
\]
and the Day convolution of two presheaves of polynomials
\[
F\colon\mathbf{Poly}_{c}\left(\mathcal{E}\right)_{X,Y}^{\textnormal{op}}\to\mathbf{Set},\qquad G\colon\mathbf{Poly}_{c}\left(\mathcal{E}\right)_{Y,Z}^{\textnormal{op}}\to\mathbf{Set}
\]
is given by the formula
\[
GF\colon\mathbf{Poly}_{c}\left(\mathcal{E}\right)_{X,Z}^{\textnormal{op}}\to\mathbf{Set},\quad GF\left(s,p,t\right)\cong\sum_{p=p_{1};p_{2},\;h\colon T\to Y}^{\sim}F\left(s,p_{1},h\right)\times G\left(h,p_{2},t\right)
\]
\end{cor}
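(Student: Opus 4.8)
The plan is to deduce this directly from Proposition~\ref{generalconvolution} together with the explicit familial representations of the composition functors of $\mathbf{Span}(\mathcal{E})$ and $\mathbf{Poly}_{c}(\mathcal{E})$ already recorded in this section. Recall that Proposition~\ref{generalconvolution} asserts, for any generic bicategory $\mathscr{A}$ and presheaves $F\colon\mathscr{A}_{X,Y}^{\textnormal{op}}\to\mathbf{Set}$, $G\colon\mathscr{A}_{Y,Z}^{\textnormal{op}}\to\mathbf{Set}$, that
\[
GF(c)=\int^{a,b}\mathscr{A}_{X,Z}(c,a;b)\times Fa\times Gb\;\cong\;\sum_{m\in\mathfrak{M}_{c}^{X,Y,Z}}Fl_{m}\times Gr_{m}.
\]
So everything reduces to spelling out the index set $\mathfrak{M}_{c}^{X,Y,Z}$ and the projections $l_{(-)},r_{(-)}$ in the two cases; but this is precisely the content of the isomorphisms \eqref{spanfamrep} and \eqref{polyfamrep} that exhibited these bicategories as generic, once they are compared with the general shape \eqref{coprodrep} of a coproduct of representables.

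For spans, take $c=(s,t)$ a span from $X$ to $Z$ with apex $T$. Comparing \eqref{spanfamrep} with \eqref{coprodrep} we read off $\mathfrak{M}_{(s,t)}^{X,Y,Z}=\mathcal{E}(T,Y)$, with $l_{h}=(s,h)$ and $r_{h}=(h,t)$ for each $h\colon T\to Y$. Substituting into the formula of Proposition~\ref{generalconvolution} gives
\[
GF(s,t)\;\cong\;\sum_{h\colon T\to Y}F(s,h)\times G(h,t),
\]
as claimed. For polynomials, take $c=(s,p,t)$; comparing \eqref{polyfamrep} with \eqref{coprodrep} we read off that $\mathfrak{M}_{(s,p,t)}^{X,Y,Z}$ is the set of $\sim$-equivalence classes of triples $(p_{1},h,p_{2})$ consisting of a factorisation $p=p_{1};p_{2}$ through an object $T$ together with a morphism $h\colon T\to Y$, with $l=(s,p_{1},h)$ and $r=(h,p_{2},t)$. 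Substituting into Proposition~\ref{generalconvolution} yields the stated formula, the superscript $\sim$ on the sum recording that it runs over representatives of these classes.

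The only point requiring a little care is the passage from ``an isomorphism for each fixed $c$'' to ``an isomorphism of presheaves'': one must check naturality of the isomorphisms above in the variable $c$, i.e.\ with respect to $2$-cells $c'\to c$ in $\mathscr{A}_{X,Z}$. This is essentially automatic, since the familial representations \eqref{spanfamrep} and \eqref{polyfamrep} are themselves natural in $c$ by construction (this being part of what \eqref{coprodrep} demands), and the coend manipulations in the proof of Proposition~\ref{generalconvolution} are functorial in the represented variable. In the polynomial case there is the additional, harmless, bookkeeping subtlety -- already flagged in the Remark following Proposition~\ref{generalconvolution} -- that writing down $\mathfrak{M}_{(s,p,t)}^{X,Y,Z}$ requires a choice of representative in each $\sim$-class; this does not affect the isomorphism at any fixed $c$, and is exactly why the polynomial formula, unlike the span formula, carries the $\sim$ on its summation. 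I expect no genuine obstacle here beyond this bookkeeping.
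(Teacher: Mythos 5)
Your proposal is correct and is exactly the paper's argument: the paper states this corollary as an immediate consequence of Proposition~\ref{generalconvolution} together with the familial representability isomorphisms \eqref{spanfamrep} and \eqref{polyfamrep}, which is precisely the substitution you carry out. Your additional remarks on naturality in $c$ and on the choice of representatives in the polynomial case are sound and consistent with the paper's own remark following Proposition~\ref{generalconvolution}.
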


The purpose of the following is to describe how Theorem \ref{generalcoherence}
may be seen as an instance of a more general result. Indeed, as a
special case of \cite[Theorem 76]{WalkerDL} we have the following
corollary.
\begin{cor}
[Doctrinal Yoneda Structures] Let $\mathscr{A}$ and $\mathscr{C}$
be bicategories with locally small hom-categories. Let $\hat{\mathscr{A}}$
be the free small local cocompletion of $\mathscr{A}$. Then for any
locally defined identity on objects functor $L\colon\mathscr{A}\to\mathscr{C}$,
with the corresponding locally defined identity on objects functor
$R=\mathscr{C}\left(L-,-\right)$ as below
\[
\xymatrix@=1em{\mathscr{C}\myar{R}{rr} &  & \mathcal{\hat{\mathscr{A}}}\ar@{}[ld]|-{\stackrel{\varphi}{\Longleftarrow}}\\
 & \;\\
 &  & \mathscr{A}\ar[uu]_{y_{\mathscr{A}}}\ar[uull]^{L}
}
\]
the structure of an oplax functor on $L$ is in bijection with the
structure of a lax functor on $R$.
\end{cor}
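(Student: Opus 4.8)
The plan is to recognise the statement as an instance of the doctrinal Yoneda structure theorem \cite[Theorem 76]{WalkerDL} and to spell out that instantiation. By \cite{dayconvolution}, the local Yoneda embeddings assemble with the Day convolution into a pseudofunctor $y_{\mathscr{A}}\colon\mathscr{A}\to\hat{\mathscr{A}}$ which exhibits $\hat{\mathscr{A}}$ as the free small local cocompletion of $\mathscr{A}$; for a locally defined identity-on-objects functor $L\colon\mathscr{A}\to\mathscr{C}$ the formula $R=\mathscr{C}(L-,-)$ defines the corresponding locally defined identity-on-objects functor $\mathscr{C}\to\hat{\mathscr{A}}$, and $\varphi$ is the comparison $2$-cell whose components come from the local functoriality of $L$. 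So the first step is to check that this triple $(y_{\mathscr{A}},L,R,\varphi)$ satisfies the hypotheses of \cite[Theorem 76]{WalkerDL} — essentially that $y_{\mathscr{A}}$ is a Yoneda embedding into a locally cocomplete bicategory and that $R$ is the nerve it induces — after which the cited theorem yields the asserted bijection between oplax structures on $L$ and lax structures on $R$.

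For transparency I would also record the underlying reason, which amounts to a Yoneda computation. A lax structure on $R$ consists of constraint $2$-cells $Ra;Rb\Rightarrow R(a;b)$ in $\hat{\mathscr{A}}$, for composable $a,b$ in $\mathscr{C}$, together with unit constraints $y(1_{X})\Rightarrow R(1_{X})$, subject to the usual associativity and unit axioms. Expanding the Day convolution, a map $Ra;Rb\Rightarrow R(a;b)$ is, for each $e$, a function
\[
\int^{a_{0},b_{0}}\mathscr{A}_{X,Z}(e,a_{0};b_{0})\times\mathscr{C}(La_{0},a)\times\mathscr{C}(Lb_{0},b)\longrightarrow\mathscr{C}(Le,a;b)
\]
natural in $e$; by the universal property of the coend and then the ordinary Yoneda lemma such a family is the same as a single $2$-cell $\varphi_{a_{0},b_{0}}\colon L(a_{0};b_{0})\to La_{0};Lb_{0}$ in $\mathscr{C}$ for each composable pair, the passage in one direction sending $(\gamma,f,g)$ to $(f;g)\circ\varphi_{a_{0},b_{0}}\circ L\gamma$. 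Dually the unit constraint of $R$ at $X$ is the same as a $2$-cell $\lambda_{X}\colon L(1_{X})\to1_{LX}$. Thus the underlying data of a lax $R$ and of an oplax $L$ are literally the same, and the direction flip is exactly the one built into ``oplax versus lax''.

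It then remains to match the axioms, which I expect to be the main (though routine) obstacle: one must expand the Day associativity and unit isomorphisms of $\hat{\mathscr{A}}$, strip off the coends by co-Yoneda, and verify that lax-functor associativity and unit coherence for $R$ become exactly the oplax-functor coherence diagrams for $L$ — naturality of the constraints being automatic from naturality of the Yoneda isomorphism. Once data and axioms are matched, the correspondence $(\text{oplax on }L)\leftrightarrow(\text{lax on }R)$ is the desired bijection; in practice I would cite \cite[Theorem 76]{WalkerDL} for the axiom bookkeeping, using the computation above to identify the instance.
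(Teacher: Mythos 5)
Your proposal matches the paper's treatment: the paper offers no independent proof of this corollary, simply presenting it as a special case of \cite[Theorem 76]{WalkerDL}, exactly as you do in your first paragraph. Your supplementary Yoneda/coend computation identifying the lax constraints on $R$ with oplax constraints on $L$ is also essentially the unwinding the paper itself carries out in the discussion immediately following the corollary, so the approach is the same.
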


Supposing that $\mathscr{A}$ is generic, and hence that composition
on $\hat{\mathscr{A}}$ has the reduced form given by Proposition
\ref{generalconvolution}, one sees that for a given locally defined
functor $L\colon\mathscr{A}\to\mathscr{C}$, giving an oplax functor
$\left(L,\varphi,\lambda\right)\colon\mathscr{A}\to\mathscr{C}$ with
constraint cells 
\[
\varphi_{a,b}\colon L\left(a;b\right)\to La;Lb,\qquad\lambda_{X}\colon L1_{X}\to1_{X}
\]
is to give a lax functor $\left(R,\phi,\omega\right)\colon\mathscr{C}\to\hat{\mathscr{A}}$
with constraints 
\[
\phi_{a,b}\colon Ra;Rb\to R\left(a;b\right),\qquad\omega_{X}\colon1_{X}\to R1_{X}
\]
These binary constraints are functions for each $c\colon X\to Z$
\[
\sum_{m\in\mathfrak{M}_{c}^{X,Y,Z}}\mathscr{C}_{X,Y}\left(Ll_{m},a\right)\times\mathscr{C}_{Y,Z}\left(Lr_{m},b\right)\to\mathscr{C}_{X,Z}\left(Lc,a;b\right)
\]
natural in $a,b$ and $c$. By naturality, to give such a function
is to give an assignment on the identity pair (we may call the result
$\Phi_{c,m}$)
\[
\left(\textnormal{id}\colon Ll_{m}\to Ll_{m},\textnormal{id}\colon Lr_{m}\to Lr_{m}\right)\mapsto\Phi_{c,m}\colon Lc\to Ll_{m};Lr_{m}
\]
A similar calculation may be done with the nullary constraints $\Lambda$.
\begin{rem}
It is this observation which is the motivation for Theorem \ref{generalcoherence}.
However, this approach does not give an efficient proof of this theorem
for a number of technical reasons. In particular, we wish to avoid
considering equivalence classes of generic morphisms (such as the
set $\mathfrak{M}_{c}^{X,Y,Z}$) to avoid technicalities involving
choices of representatives.
\end{rem}

\bibliographystyle{siam}
\bibliography{references}

\begin{thebibliography}{10}

\bibitem{ben1967}
{\sc J.~B{\'e}nabou}, {\em Introduction to bicategories}, in Reports of the
  {M}idwest {C}ategory {S}eminar, Springer, Berlin, 1967, pp.~1--77.

\bibitem{unispans}
{\sc R.~J.~M. Dawson, R.~Par{\'e}, and D.~A. Pronk}, {\em Universal properties
  of {S}pan}, Theory Appl. Categ., 13 (2004), pp.~No. 4, 61--85.

\bibitem{dayconvolution}
{\sc B.~Day}, {\em On closed categories of functors}, in Reports of the
  {M}idwest {C}ategory {S}eminar, {IV}, Lecture Notes in Mathematics, Vol. 137,
  Springer, Berlin, 1970, pp.~1--38.

\bibitem{Diers}
{\sc Y.~Diers}, {\em Cat\'egories localisables}, PhD thesis, Universit\'e de
  Paris VI, 1977.

\bibitem{DiersDiag}
{\sc Y.~Diers}, {\em Spectres et localisations relatifs \`a un foncteur}, C. R.
  Acad. Sci. Paris S\'er. A-B, 287 (1978), pp.~A985--A988.

\bibitem{gambinokock}
{\sc N.~Gambino and J.~Kock}, {\em Polynomial functors and polynomial monads},
  Math. Proc. Cambridge Philos. Soc., 154 (2013), pp.~153--192.

\bibitem{Gir86}
{\sc J.-Y. Girard}, {\em The system {$F$} of variable types, fifteen years
  later}, Theoret. Comput. Sci., 45 (1986), pp.~159--192.

\bibitem{Joyal86}
{\sc A.~Joyal}, {\em Foncteurs analytiques et esp\`eces de structures}, in
  Combinatoire \'enum\'erative ({M}ontreal, {Q}ue., 1985/{Q}uebec, {Q}ue.,
  1985), vol.~1234 of Lecture Notes in Math., Springer, Berlin, 1986,
  pp.~126--159.

\bibitem{icons}
{\sc S.~Lack}, {\em Icons}, Appl. Categ. Structures, 18 (2010), pp.~289--307.

\bibitem{Lam88}
{\sc F.~Lamarche}, {\em Modelling polymorphism with categories}, PhD thesis,
  McGill University, 1989.

\bibitem{StreetFTM}
{\sc R.~Street}, {\em The formal theory of monads}, J. Pure Appl. Algebra, 2
  (1972), pp.~149--168.

\bibitem{WalkerDL}
{\sc C.~Walker}, {\em Distributive laws via admissibility}.
\newblock under review; preprint available at arXiv:1706.09575v2, 2017.

\bibitem{WeberGeneric}
{\sc M.~Weber}, {\em Generic morphisms, parametric representations and weakly
  {C}artesian monads}, Theory Appl. Categ., 13 (2004), pp.~No. 14, 191--234.

\bibitem{weber}
\leavevmode\vrule height 2pt depth -1.6pt width 23pt, {\em Polynomials in
  categories with pullbacks}, Theory Appl. Categ., 30 (2015), pp.~No. 16,
  533--598.

\end{thebibliography}

\end{document}